\documentclass[10pt]{article}

\usepackage[utf8]{inputenc}
\usepackage[T1]{fontenc}

\usepackage{epsf}
\usepackage{amsmath}

\allowdisplaybreaks

\usepackage[showframe=false]{geometry}
\usepackage{changepage}

\usepackage{epsfig}
\usepackage{amssymb}

\usepackage{amsthm}
\usepackage{setspace}
\usepackage{cite}
\usepackage{mcite}

\usepackage{algorithmic}  
\usepackage{algorithm}

\usepackage{shadow}
\usepackage{fancybox}
\usepackage{fancyhdr}

\usepackage{color}
\usepackage[usenames,dvipsnames,svgnames,table]{xcolor}
\newcommand{\bl}[1]{\textcolor{blue}{#1}}

\definecolor{mypurple}{rgb}{.4,.0,.5}

\usepackage[hyphens]{url}

\usepackage[colorlinks=true,
            linkcolor=black,
            urlcolor=blue,
            citecolor=purple]{hyperref}

\usepackage{breakurl}

\def\y{{\bf y}}

\def\x{{\bf x}}

\def\x{{\mathbf x}}

\def\x{{\bf x}}
\def\y{{\bf y}}

\def\be{\begin{equation}}
\def\ee{\end{equation}}
\def\ba{\left[\begin{array}}
\def\ea{\end{array}\right]}

\def\x{{\bf x}}
\def\y{{\bf y}}

\def\1{{\bf 1}}

\def\g{{\bf g}}
\def\0{{\bf 0}}

\def\cG{{\mathcal G}}

\def\mR{{\mathbb R}}

\def\mN{{\mathbb N}}
\def\mE{{\mathbb E}}
\def\mS{{\mathbb S}}

\def\mP{{\mathbb P}}

\def\lp{\left (}
\def\rp{\right )}

\def\y{{\bf y}}

\def\x{{\bf x}}

\def\x{{\mathbf x}}

\def\x{{\bf x}}
\def\y{{\bf y}}

\def\be{\begin{equation}}
\def\ee{\end{equation}}
\def\ba{\left[\begin{array}}
\def\ea{\end{array}\right]}

\def\x{{\bf x}}
\def\y{{\bf y}}

\def\({\left (}
\def\){\right )}

\def\1{{\bf 1}}

\def\g{{\bf g}}
\def\0{{\bf 0}}

\def\cX{{\mathcal X}}

\def\cN{{\mathcal N}}
\def\cX{{\mathcal X}}

\usepackage{xcolor}
\usepackage{color}

\definecolor{darkgreen}{rgb}{0, 0.4,0}

\definecolor{purplebrown}{rgb}{0.5,0.1,0.6}

\definecolor{ultclupcol}{rgb}{0.1,0.5,0.5}

\definecolor{mytrycolor}{rgb}{0.5,0.7,0.2}

\definecolor{ultclupcola}{rgb}{.5,0,.5}

\definecolor{shadebrown}{rgb}{0.1,0.1,0.9}
\definecolor{lightblue}{rgb}{0.2,0,1}

\usepackage{fancybox}
\usepackage{graphicx}
\usepackage{epstopdf}
\usepackage{epsfig}
\usepackage{wrapfig}
\usepackage{subfigure}

\usepackage{xcolor}
\usepackage{tcolorbox}

\newtcbox{\xmybox}{on line,
arc=7pt,
before upper={\rule[-3pt]{0pt}{10pt}},boxrule=0pt,
boxsep=0pt,left=6pt,right=6pt,top=0pt,bottom=0pt,enhanced, coltext=blue, colback=white!10!yellow}

\newtcbox{\xmyboxa}{on line,
arc=7pt,
before upper={\rule[-3pt]{0pt}{10pt}},boxrule=0pt,
boxsep=0pt,left=6pt,right=6pt,top=0pt,bottom=0pt,enhanced, colback=white!10!yellow}

\newtcbox{\xmyboxb}{on line,
arc=7pt,
before upper={\rule[-3pt]{0pt}{10pt}},boxrule=1pt,colframe=darkgreen!100!blue,
boxsep=0pt,left=6pt,right=6pt,top=0pt,bottom=0pt,enhanced, colback=white!10!yellow}

\newtcbox{\xmyboxc}{on line,
arc=7pt,
before upper={\rule[-3pt]{0pt}{10pt}},boxrule=.7pt,colframe=blue!100!blue,
boxsep=0pt,left=6pt,right=6pt,top=0pt,bottom=0pt,enhanced, coltext=blue, colback=white!10!yellow}

\newtcbox{\xmytboxa}{on line,
arc=7pt,
before upper={\rule[-3pt]{0pt}{10pt}},boxrule=.0pt,colframe=pink!50!yellow,
boxsep=0pt,left=6pt,right=6pt,top=0pt,bottom=0pt,enhanced, coltext=white, colback=blue!40!red}

\newtcbox{\xmytboxb}{on line,
arc=7pt,
before upper={\rule[-3pt]{0pt}{10pt}},boxrule=.0pt,colframe=pink!50!yellow,
boxsep=0pt,left=6pt,right=6pt,top=0pt,bottom=0pt,enhanced, coltext=white, colback=white!40!green}

\makeatletter
\newcommand\subsubsubsection{\@startsection{paragraph}{4}{\z@}{-2.5ex\@plus -1ex \@minus -.25ex}{1.25ex \@plus .25ex}{\normalfont\normalsize\bfseries}}
\newcommand\subsubsubsubsection{\@startsection{subparagraph}{5}{\z@}{-2.5ex\@plus -1ex \@minus -.25ex}{1.25ex \@plus .25ex}{\normalfont\normalsize\bfseries}}
\makeatother

\newtheorem{theorem}{Theorem}

\newtheorem{remark}{Remark}

\begin{document}

\begin{singlespace}

\title {An RDT based approach to large deviations of Wishart and Wigner matrices spectral edges  
}
\author{
\textsc{Mihailo Stojnic
\footnote{e-mail: {\tt flatoyer@gmail.com}}
}}
\date{}
\maketitle

\centerline{{\bf Abstract}} \vspace*{0.1in}

We present a novel methodology for studying \emph{large deviations principles} (LDPs) of random matrices. By utilizing a partially lifted variant of \emph{random duality theory} (RDT), we develop a generic LDP framework that completely circumvents traditional random matrix theory (RMT) methods.  To demonstrate the framework's simplicity and accuracy, we apply it to the Wishart and Wigner GOE classical statistical ensembles. In both cases, we obtain elegant LDP characterizations of the upper and lower spectral edges that fully match the results achieved through traditional \emph{Coulomb gas} methodologies in \cite{MajVer09,KatzPer10}.

\vspace*{0.25in} \noindent {\bf Index Terms: Wishart/Wigner matrix; LDP; Random duality theory (RDT)}.

\end{singlespace}

\section{Introduction}
\label{sec:back}

Since its introduction in the first half of the last century \cite{Wishart28}, random matrix theory has been an integral component of various scientific and engineering fields. Its applications range from statistical physics and quantum chromodynamics \cite{Wigner51,Dyson62,FyoKor99,FyoSomm97,Gar88,GarDer88,Verbaa94,Johansson00,AkeKan00} to pure and applied linear algebra \cite{Litvaketal05,Rudelson08}, ODEs \cite{Fyodorov16,Cugli97}, and wireless communications and signal processing/compressed sensing \cite{Fey2008,STS07,CandesTao06,Donoho06CS,StojnicICASSP10var,StojnicISIT2010binary}. Furthermore, RMT is utilized in computational biology \cite{Eisen98}, biological microarrays and genetics \cite{Holter00,ABB00,NovSteph08}, finance \cite{BouPott01}, and statistical machine learning/pattern recognition \cite{BLT20,BMR21,HMRT22,JGH18,Cover65}.

Random matrices and large deviations \cite{DemboZeit98} analyses have also played a crucial role in studying random landscapes. The Kac-Rice method \cite{Kac43,Rice44,Rice45,AdlerTaylor07} is often instrumental for characterizing the \emph{complexity} of critical points and is believed to be fundamentally important in determining the polynomial solvability of random optimizations and the existence of \emph{statistical computational gaps}. More on complexity can be found in statistical physics \cite{Crisanti95,CrisantiComp03,RosBirCam19,Fyod04,FyodWill07,BrayDean07,CavGiaPar98} and mathematics literature \cite{Auff13,Auff13a,BAetal19,HuangSell26,Subag17}. Additionally, details on associated algorithmic aspects, the presence of statistical computational gaps, and their connections with landscape complexities are available in \cite{Stojnicclupsk25,BMPZ23,BarbAKZ23,Subag21,HuangS22a,ElAlMont20,Montanari19,Bald15,Bald20,Stojnicalgbp25}.

\subsection{Relevant prior work}
\label{sec:examples}

This work focuses on two classical Gaussian ensembles: the Wishart \cite{Wishart28} and the Wigner/GOE \cite{Wigner51}. Over the last century, a robust random matrix theory (RMT) has been developed to characterize the various properties of these matrices. Notably, the Wigner and Marchenko-Pastur spectral distribution characterizations \cite{Wigner58,MarPas67} established the foundation for many subsequent developments in the field. Beyond providing a generic treatment of these ensembles, these characterizations highlight the distinct roles of the two primary parts of the spectrum: (i) the \emph{edges} (the endpoints of the spectrum) and (ii) the \emph{bulk} (all other eigenvalues or singular values). Our primary focus is on the spectral edges and their associated \emph{large deviation principles} (LDPs).

\vspace{.1in} 

\noindent \underline{\textbf{\emph{Plain Wigner/GOE/Wishart LDPs}}:} Over the last thirty years a strong theory has been established to study various forms of LDPs, beginning with the characterization of the Wigner matrices' empirical spectral distribution \cite{BenGui97} and the subsequent characterization of the maximal eigenvalue \cite{BenArDemGuio01}. While similar results were achieved for Wishart matrices, both spectral edges must be addressed separately due to the lack of symmetry found in the Wigner/GOE ensemble. Using the \emph{Coulomb gas} methodology \cite{BenArDemGuio01,BenGui97} (for more on Coulomb interactions see also \cite{LiebCol69,Frohlich76,Len61,SerfatyCol24}), the LDPs for the maximal and minimal Wishart eigenvalues were determined in \cite{MajVer09} and \cite{KatzPer10}. Furthermore, research has expanded beyond standard \emph{outer} LDPs to include \emph{inner} LDPs, which characterize deviations toward the interior of the spectral domain. The inner spectral edges LDPs typically have faster rates and have been characterized through similar Coulomb gas methodologies for both Wigner/GOE  \cite{DeanMaj06} and Wishart  \cite{VivMajBoh07,KatzPer10} ensembles.

\vspace{.1in} 

\noindent \underline{\textbf{\emph{Deformations}}:}  A significant body of work exists concerning the deformed variants of the above ensembles. For instance, \cite{GuionnetZeit02} derived the LDP for the empirical measure of full rank deformations of GOE, aligning with predictions from \cite{Matytsin1994}. Additionally, \cite{Maida07} established a largest eigenvalue LDP for rank-one deformed Wigner/GOE matrices. Further research by \cite{Guionnet2012large} examined diagonal deformations of finite rank random matrices with delocalized eigenvectors, determining the LDP of the extreme eigenvalues and matching results for finite rank Wishart matrices found in \cite{Fey2008}. \cite{MckennaDeformed21} obtained extreme eigenvalues LDPs for full rank deformations of Wigner/GOE matrices. Similar results were obtained for diagonally generalized sample covariance matrices in \cite{HussMcKenna24}, building on earlier statistical physics studies \cite{MaillardLDP21, MergnyPott22} (further utilization of replica statistical tools within the context of deformed matrices can also be found in \cite{LeDoussal2025}). Other forms of deformation, such as the sum of random matrices, are explored in \cite{DonatiMaida12, GuionnetMaida20} (for research on products of matrices, see, e.g., \cite{MergnyPott22, RamKatzCas12}). General Jacobi ensembles are discussed in \cite{Forrester12}, while nonsymmetric ensembles are covered in \cite{Byun2026, XuZeng25}. The spectral features of the sums and products can in principle be managed via Voiculescu's free probability theory \cite{Voic91}. However, studying the associated LDPs is much more challenging.

\vspace{.1in} 

\noindent \underline{\textbf{\emph{Impact of different statistics}}:} Many distributional results extend universally \cite{FeldSod10, DeiftGioev07} beyond Gaussian matrices with minimal changes. Conversely, \cite{BordenaveCaputo2014} characterized LDPs of empirical measures for matrices with tails heavier than Gaussians. Similarly, \cite{AugeriNonGauss2016} examined LDPs of the largest eigenvalues for Wigner matrices (see \cite{Groux17} for related results on covariance matrices). In both cases, the LDP speed was found to be slower than in the Gaussian case, with the power of the speed's leading dimensional term corresponding directly to the power of the tail decay.

Analogous eigenvalue problems also arise in studies of graph adjacency matrices. For further details on LDPs and spectral properties in these contexts, please refer to \cite{GangulyNam2022, BhattaSparseGraph2021, CookDembo20, BhattaGang20}, as well as \cite{AugeriBasakSparseWig25} for sparse Wigner matrices.

Regarding Wigner matrices with sub-Gaussian entries, \cite{GuioHuss20} demonstrated that under \emph{sharp sub-Gaussianity}, the resulting LDP matches the Gaussian LDP. However, the broader sub-Gaussian scenario addressed in \cite{Guionnet2012large} can differ from the GOE for very large deviations. Finally, studies with varying variance profiles in \cite{Husson22, Ducatez24} also utilize the sharp sub-Gaussianity concept.

\vspace{.1in} 

\noindent \underline{\textbf{\emph{Beyond LDPs}}:}  Many other properties of random ensembles have been studied in recent literature, yielding several significant results. For example, after the convergence properties of the largest and smallest Wishart eigenvalues were established \cite{BaiLargest88,BaiSmallest93,Silverstein85}, research turned toward their typical moderate deviations or fluctuations. The Tracy-Widom law was subsequently established \cite{TraWid96} (see also \cite{Johnst01,FanJohnsTW22} for different ensembles). From a distributional perspective, these provide more accurate characterizations than LDPs; however, they are restricted to intervals around spectral edges that are not proportional to the extreme eigenvalues.

Regarding the deformed matrices discussed above, \cite{BBP05,Peche06} further examined the properties of their largest eigenvalues, focusing on spectral outliers and the BBP phase transition phenomena. An interesting combination of BBP and LDP is presented in \cite{BirGuioSpike20}, which characterizes the joint LDP of the largest eigenvalue of a rank-one deformed random matrix and the leading eigenvector projection onto the perturbation. Beyond spectral edges, vast literature explores the spectral bulk. Features such as universality, moderate deviations, and eigenvalue spacing across various scenarios—including non-symmetric ones—are now well understood (see, e.g., \cite{JohanssonUniv01,Brezin96,Erdos2012,LeeUnivDefBulk16,TaoNu11,ChenMann94,Cipolloni23,Cipolloni21,Bordenave18}).

While these spectral properties and associated LDPs are largely asymptotic, finite-dimensional considerations are also of interest. A subset of the literature in this direction includes \cite{DumEdel02,Edelman88,Dumitriu08,Koev06,DumEdeShu07,Edelman91,KrishnaiahChang1971}. In many cases, full distributional characterizations for various statistical metrics are obtainable. Because finite dimensions make underlying analyses more challenging, known results typically relate to Gaussian contexts. Attaining simple closed-form characterizations is infeasible, which may preclude direct asymptotic analyses. However, this motivates the search for alternative methods that leverage high-dimensional scenarios, such as the spectral characterizations via empirical measures or moderate and large deviations discussed above.

\subsection{Our contributions}
\label{sec:context}

We develop novel methodology for characterization of random matrices LDPs. As explained above, these problems have already been studied in various context. Typical approaches rely on statistical physics \emph{Coulomb gas} method and so-called \emph{tilted spherical integrations}. We present a completely different approach that circumvents classical random matrix spectral theory and instead relies on \emph{Random duality theory} (RDT) \cite{StojnicRegRndDlt10,StojnicICASSP10var,StojnicISIT2010binary}. Utilizing a particular partially lifted RDT variant, we first derive the LDPs for the upper and lower spectral edges of the Wishart ensemble (Sections \ref{sec:ldpmaxwish} and \ref{sec:ldpminwish}, respectively).  We then complement these results by deriving the LDP for the upper spectral edge of the Wigner/GOE ensemble (Section \ref{sec:ldpmaxwig}; due to the ensemble symmetry, the LDP of the lower spectral edge is identical). We then demonstrate (Sections \ref{sec:ldpwishagrmax} and \ref{sec:ldpwishagrmin}) that our LDP results identically match the Coulomb gas based predictions obtained within statistical physics for the upper and lower Wishart edges in \cite{MajVer09} and \cite{KatzPer10}, respectively. The same type of agreement is demonstrated for the Wigner/GOE matrices as well (Section \ref{sec:ldpwigagrmax}). 

The developed machinery is extremely powerful and can be used to handle LDPs of various different ensembles including many if not all of those studied in prior literature and discussed in the previous section. 
As this is the introductory paper, we chose for the initial presentation two most classical random ensembles, Gaussian Wigner and Gaussian Wishart. Extension to other ensembles rely on the very same concepts introduced here but are technically problem specific. Such technicalities need additional attention and will be discussed elsewhere.

\section{LDPs for Wishart matrices}
\label{sec:ldpwish}

We start by formalizing definitions of mathematical objects that will be used throughout the presentation that follows. For two positive integers $m\in\mN$ and  $n\in\mN$ let matrix $A\in\mR^{m\times n}$ be comprised of independent standard normal entries, i.e., let $A_{ij}\sim \cN(0,1),1\leq i,j\leq n$ with
\begin{equation*}\label{eq:amat1}
  \mE A_{ij}^2=1 \quad \mbox{and}\quad \mE A_{ij}A_{kl}=0, \quad \mbox{for} \quad i\neq k\quad \mbox{and/or}\quad j\neq l.
\end{equation*}
Without loss of generality we assume $m\geq n$ and consider the proportional large dimensional scenarios with
\begin{equation*}\label{eq:amat1a0}
\alpha \triangleq \lim_{n\rightarrow\infty} \frac{m}{n}\geq 1.
\end{equation*}
One then associates with $A$ the following Wishart matrix 
\begin{equation*}\label{eq:amat1a0b0}
W= A^TA.
\end{equation*}
Clearly, $W$ is symmetric and positive semi-definite, i.e., $W=W^T\succeq 0$. In what follows, we will be interested in spectral edges of $W$ and their behavior away from the expected (concentrating) values. To adequately address both spectral edges, we split the presentation in the remaining portion of this section into two subsections: (i) the first one that relates to the right spectral edge (i.e., the maximal eigenvalue) and (ii) the second one that relates to the left spectral edge (i.e., the minimal eigenvalue).

\subsection{Wishart maximal eigenvalue LDP}
\label{sec:ldpmaxwish}

 Let $\mS^n=\{\x|\|\x\|_2=1\}$ be the unit sphere in $\mR^n$ and set 
\begin{eqnarray}\label{eq:inteq1a}
\xi^+_{sph} = \lim_{n\rightarrow\infty} \frac{1}{\sqrt{n}} \max_{\x\in\mS^n,\y\in\mS^m} \y^TA\x.
\end{eqnarray}
It is not that difficult to see that $\xi^+_{sph}$ is both the limiting scaled maximal singular value of $A$ and the square root of the limiting scaled maximal eigenvalue of $W$. In particular, trivial algebraic manipulations give
\begin{eqnarray}\label{eq:inteq1ab0}
\xi^+_{sph} & = & \lim_{n\rightarrow\infty} \frac{1}{\sqrt{n}}\max_{\x\in\mS^n,\y\in\mS^m} \y^TA\x
\nonumber \\
& = &
\lim_{n\rightarrow\infty} \frac{1}{\sqrt{n}} \max_{\x\in\mS^n} \| A\x \|_2
\nonumber \\
& = &
 \lim_{n\rightarrow\infty} \frac{1}{\sqrt{n}} \max_{\x\in\mS^n} \sqrt{\x^TA^TA\x}\nonumber \\
& = &
 \lim_{n\rightarrow\infty} \frac{1}{\sqrt{n}} \sqrt{\lambda_n(A^TA)},
\end{eqnarray}
where function $\lambda_i(\cdot)$ outputs the $i$-th smallest eigenvalue of its argument. As stated above, we assume $\alpha\geq 0$. However, throughout the presentation it will be clear that everything discussed in this subsection can be trivially adjusted to hold for $\alpha\leq 1$.

We are interested in large deviations principles (LDP) associated with $\xi^+_{sph}$. In more concrete terms, we consider the following likelihood
\begin{eqnarray}\label{eq:inteq1ab0}
\mP\lp \xi^+_{sph}\geq u\rp \quad \mbox{ with } \quad u\geq \mE \xi^+_{sph},
\end{eqnarray}
and wonder at what rate it decays to zero as a function of $u$ (a real scalar (basically proportional to $\mE\xi^+_{sph}$)). As stated earlier, this is a classical problem in probability theory that can be attacked via random matrix theory. In that regard, particularly relevant are the results from \cite{MajVer09} where a \emph{Coulomb gas} methodology was utilized and an explicit LDP characterization of $\xi^+_{sph}$ was obtained (see also \cite{HussMcKenna24} for diagonal generalizations and a \emph{tilted spherical integrals} treatment; corresponding statistical physics considerations can be found in \cite{MaillardLDP21,MergnyPott22}). Here, we attack the problem from in a different way. We utilize \emph{Random duality theory} (RDT) (its a partially lifted form) and obtain an alternative LDP formulation. The proposed methodology is much simpler and produces the results summarized in the following theorem.

\begin{theorem}
\label{thm:thm1}
  Consider large $n,m\in\mN$ such that $\lim_{n\rightarrow\infty}\frac{m}{n}\rightarrow\alpha\geq 1$ and let $A_{ij}\sim \cN(0,1)$ be the independent elements of $A\in\mR^{m\times n}$. For a real scalar $c_3\geq 0$ set
  \begin{eqnarray}
   \label{eq:thm1eq1}
 \hat{\gamma}_{1} & = &  \frac{c_3 + \sqrt{c_3^2+4\alpha}}{4}
 \nonumber \\
 \phi_{1}(\gamma_1) & = &  \gamma_{1}c_3 - \frac{\alpha}{2}\log \lp 1 - \frac{c_3}{2 \gamma_{1}}   \rp
 \nonumber \\
 \hat{\gamma}_{2} & = &  \frac{c_3 + \sqrt{c_3^2+4}}{4}
 \nonumber \\
 \phi_{2}(\gamma_2) & = &  \gamma_{2}c_3 - \frac{1}{2}\log \lp 1 - \frac{c_3}{2 \gamma_{2}}   \rp.
 \end{eqnarray}
 Let $\xi^+_{sph}$ be as in (\ref{eq:inteq1ab0}) and for a real scalar $u\geq \mE \xi^+_{sph}$ set
 \begin{eqnarray}
   \label{eq:thm1eq2}
 \nonumber \\
 \phi_u(u) & = & -\frac{c_3^2}{2} + \phi_{1}(\hat{\gamma}_1) + \phi_{2}(\hat{\gamma}_2) -c_3 u .
 \end{eqnarray}
One then has the following LDP with the indicator rate function $\zeta_u(u)$
\begin{eqnarray}
   \label{eq:thm1eq2}
\lim_{n\rightarrow \infty} \frac{\log\lp \mP\lp \xi^+_{sph} \geq u  \rp \rp}{n} \leq \min_{c_3\geq 0}\phi_u(u) \triangleq -\zeta_u(u).
\end{eqnarray}
\end{theorem}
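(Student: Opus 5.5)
The plan is to combine a Chernoff (exponential Markov) bound with a \emph{lifted} Gaussian comparison. The comparison replaces the bilinear process $\y^TA\x$ by a decoupled process whose exponential moments factor and can be computed explicitly. Throughout, $c_3\geq 0$ is fixed, and the bound is optimized over $c_3$ only at the very end.

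\emph{Step 1 (Chernoff).} Set $M_n=\max_{\x\in\mS^n,\y\in\mS^m}\y^TA\x$. Markov's inequality applied to $e^{c_3\sqrt{n}M_n}$ gives
\begin{equation*}
\mP\lp M_n\geq u\sqrt{n}\rp\leq e^{-c_3un}\,\mE e^{c_3\sqrt{n}M_n}.
\end{equation*}
So it suffices to show $\lim_{n\rightarrow\infty}\frac{1}{n}\log \mE e^{c_3\sqrt{n}M_n}\leq -\frac{c_3^2}{2}+\phi_1(\hat{\gamma}_1)+\phi_2(\hat{\gamma}_2)$.

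\emph{Step 2 (lifted comparison).} Let $g\sim\cN(0,1)$ be a scalar, and let $\g\in\mR^m$ and $\mathbf{h}\in\mR^n$ be standard Gaussian vectors, all independent of each other and of $A$. Consider the two processes
\begin{equation*}
X_{\x,\y}=\y^TA\x+g\qquad\mbox{and}\qquad Y_{\x,\y}=\y^T\g+\x^T\mathbf{h}.
\end{equation*}
Both have variance $2$ on $\mS^n\times\mS^m$. Their covariances satisfy
\begin{equation*}
\mE Y_{\x,\y}Y_{\x',\y'}-\mE X_{\x,\y}X_{\x',\y'}=(1-\x^T\x')(1-\y^T\y')\geq 0.
\end{equation*}
Since the variances agree and the covariances of $X$ dominate those of $Y$ nowhere, a Slepian/Kahane-type comparison applies to the increasing function $F(t)=e^{c_3\sqrt{n}t}$ of the maximum. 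This gives
\begin{equation*}
\mE e^{c_3\sqrt{n}\max X}\leq \mE e^{c_3\sqrt{n}\max Y}.
\end{equation*}
I would carry this out in three moves. First, restrict to finite $\epsilon$-nets of the two spheres. Second, replace the maximum by the smooth surrogate $\frac{1}{\beta}\log\sum e^{\beta(\cdot)}$. Third, apply Kahane's interpolation with $F$ composed with this surrogate, checking the sign of the mixed second derivatives. Then let $\beta\rightarrow\infty$ and refine the nets, using continuity of both processes in $(\x,\y)$.

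The key point of the lifting is that $\max X=M_n+g$, with $g$ independent of $A$. Hence $\mE e^{c_3\sqrt{n}\max X}=e^{c_3^2n/2}\,\mE e^{c_3\sqrt{n}M_n}$. On the other side, $\max Y=\|\g\|_2+\|\mathbf{h}\|_2$, and the two norms are independent. Together these yield
\begin{equation*}
\mE e^{c_3\sqrt{n}M_n}\leq e^{-c_3^2n/2}\,\mE e^{c_3\sqrt{n}\|\g\|_2}\,\mE e^{c_3\sqrt{n}\|\mathbf{h}\|_2}.
\end{equation*}
This accounts for the $-\frac{c_3^2}{2}$ term in $\phi_u$.

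\emph{Step 3 (norm exponential moments).} For any $\gamma>0$ we have $\sqrt{n}\|\g\|_2\leq \gamma n+\frac{\|\g\|_2^2}{4\gamma}$. Therefore, for $\gamma>c_3/2$, the Gaussian integral gives
\begin{equation*}
\mE e^{c_3\sqrt{n}\|\g\|_2}\leq e^{c_3\gamma n}\lp 1-\frac{c_3}{2\gamma}\rp^{-m/2},
\end{equation*}
whose normalized logarithm is $\phi_1(\gamma)$, using $m/n\rightarrow\alpha$. Minimizing over $\gamma$ gives the stationarity condition $c_3=\frac{\alpha c_3}{4\gamma^2-2c_3\gamma}$, i.e. $4\gamma^2-2c_3\gamma-\alpha=0$, whose positive root is $\hat{\gamma}_1$. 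The same computation for $\mathbf{h}$, with $\alpha$ replaced by $1$, gives $\phi_2(\hat{\gamma}_2)$. This bound is in fact tight at the exponential scale by Laplace's method, although only the upper bound is needed here. Combining Steps 1--3 yields $\frac{1}{n}\log\mP(\xi^+_{sph}\geq u)\leq\phi_u(u)$ for every $c_3\geq 0$. Taking the minimum over $c_3\geq 0$ then gives the stated rate $-\zeta_u(u)$.

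The main obstacle is Step 2. The comparison must be established for the \emph{exponential} of the maximum over continuous sets, not merely for expectations as in Sudakov--Fernique or Gordon. I would first try Kahane's inequality directly on the smoothed functional $F\big(\frac{1}{\beta}\log\sum_i e^{\beta X_i}\big)$. For this functional the off-diagonal second derivatives are nonnegative, which, combined with the covariance ordering above, should give the correct direction. If the sign bookkeeping fails, the fallback is Gordon's original lifted inequality for $\mE\max_i e^{\lambda X_i}$ with a scalar auxiliary Gaussian, which is precisely this statement.
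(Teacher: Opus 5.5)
Your proof follows the paper's argument. It uses the same lifted pair $\y^TA\x+g$ versus $\y^T\g+\x^T\mathbf{h}$, the same comparison of exponential moments of the maxima, the same extraction of $e^{-c_3^2n/2}$ through $g$, Chernoff, and the same $\gamma$-parametrized bound on $\mE e^{c_3\sqrt{n}\|\g\|_2}$. Your AM--GM plus Gaussian-integral computation in Step 3 is a self-contained proof of the upper bound the paper imports from \cite{StojnicMoreSophHopBnds10}, and only that upper bound is needed. What you call the main obstacle is not one. Since $\max_i\psi_q(X_i)=\psi_q(\max_iX_i)$ for increasing $\psi_q$, it is exactly Theorem \ref{thm:Gordoncor1} with $\psi_q(x)=e^{c_3x}$, which the paper simply invokes; it also follows from the tail form of Slepian's lemma.

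You should fix two signs in Step 2, which happen to cancel each other. First, the correct identity is $\mE X_{\x,\y}X_{\x',\y'}-\mE Y_{\x,\y}Y_{\x',\y'}=(1-\x^T\x')(1-\y^T\y')\geq 0$. So $X$ is the \emph{more} correlated process, which is why its maximum is smaller; with the ordering you wrote, Slepian would give the reverse inequality. Second, take $F_\beta(x)=\frac{1}{\beta}\log\sum_k e^{\beta x_k}$ and $p_i=e^{\beta x_i}/\sum_k e^{\beta x_k}$. The off-diagonal second derivatives of $e^{\lambda F_\beta}$ are $\lambda e^{\lambda F_\beta}p_ip_j(\lambda-\beta)$, which is $\leq 0$ once $\beta\geq\lambda$, not $\geq 0$. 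With both signs corrected, Kahane's formula still gives $\mE e^{c_3\sqrt{n}\max X}\leq\mE e^{c_3\sqrt{n}\max Y}$, and the rest of your proof goes through unchanged.
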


\begin{proof} For standard normal vectors $\g^{(1)}\in\mR^{m\times 1}$, $\g^{(2)}\in\mR^{n\times 1}$,  and standard normal $g\in\mR$ (all with elements independent among themselves and of the elements of $A$), we define two centered Gaussian processes indexed by an array $\cX = \{\x,\y\}$
  \begin{eqnarray}
\label{eq:mr1}
 \cG (\cX) & \triangleq &  \cG (\x,\y)  \triangleq  \sum_{i=1}^n \sum_{j=1}^m A_{i,j}\x_i\y_j  + g  \nonumber   \\
 \cG_u (\cX) & \triangleq &  \cG_u (\x,\y)  \triangleq    \lp\g^{(1)}\rp^T\y +   \lp\g^{(2)}\rp^T\x .
  \end{eqnarray}
Clearly, $\x \in\mR^{n\times 1}$ and $\y \in\mR^{m\times 1}$. Taking two arrays $\cX^{(a_1)}=\{ \x^{(a_1)},\y^{(a_1)}\}$ and $\cX^{(a_2)}=\{ \x^{(a_2)},\y^{(a_2)}\}$ with $\|\x^{(a_i)}\|_2=\|\y^{(a_i)}\|_2=1,i=1,2$, we further write
  \begin{eqnarray}
\label{eq:mr2}
\mE \cG (\cX^{(a_1)})\cG (\cX^{(a_2)})   & =  &    \lp \x^{(a_1)} \rp^T\x^{(a_2)} \lp \y^{(a_1)} \rp^T\y^{(a_2)}  + 1 
\nonumber   \\
\mE \cG_u (\cX^{(a_1)})\cG_u (\cX^{(a_2)})  & =  &  \lp \y^{(a_1)} \rp^T\y^{(a_2)}  +   \lp \x^{(a_1)} \rp^T\x^{(a_2)} .
  \end{eqnarray}
The above effectively evaluates the average correlation/overlap between two system/process replicas for both $\cG (\cX)$ and $\cG_u (\cX)$ processes. From (\ref{eq:mr2}), one further writes
  \begin{eqnarray}
\label{eq:mr5}
  \mE \cG (\cX^{(a_1)})\cG (\cX^{(a_2)})
 -
\mE \cG_u (\cX^{(a_1)})\cG_u (\cX^{(a_2)} )
 & = &    \lp \x^{(a_1)} \rp^T\x^{(a_2)} \lp \y^{(a_1)} \rp^T\y^{(a_2)}  + 1 
\nonumber
\\
& &  - \lp \y^{(a_1)} \rp^T\y^{(a_2)}  -   \lp \x^{(a_1)} \rp^T\x^{(a_2)}
\nonumber
\\
& = &
 \lp 1- \lp \x^{(a_1)} \rp^T\x^{(a_2)}  \rp \lp 1 - \lp \y^{(a_1)} \rp^T\y^{(a_2)}\rp
  \geq   0.
  \end{eqnarray}
We also note that   
  \begin{eqnarray}
\label{eq:mr5a0}
  \mE \cG (\cX^{(a_1)})\cG (\cX^{(a_1)})
 -
\mE \cG_u (\cX^{(a_1)})\cG_u (\cX^{(a_1)} )
  = 
 \lp 1- \lp \x^{(a_1)} \rp^T\x^{(a_1)}  \rp \lp 1 - \lp \y^{(a_1)} \rp^T\y^{(a_1)}\rp
  =   0,
  \end{eqnarray}
  where the last equality follows since $\x^{(a_i)}\in\mS^n$ and/or $\y^{(a_i)}\in\mS^m$ for $i=1,2$.
  
We digress for a moment and recall on Theorem 1.1 from \cite{Gordon85} (the part of the theorem used here is actually known as Slepian lemma and is introduced in \cite{Slep62}).

\begin{theorem}(\cite{Gordon85,Slep62})
\label{thm:Gordonpos1} Let $X_{i}$ and $Y_{i}$, $1\leq i\leq n$, be two centered Gaussian processes which satisfy the following inequalities for all choices of indices
\begin{enumerate}
\item $\mE(X_{i}^2)=\mE(Y_{i}^2)$
\item $\mE(X_{i}X_{l})\leq \mE(Y_{i}Y_{l}), i\neq l$.
\end{enumerate}
 Then
\begin{equation*}
\mE(\min_{i} X_{i})\leq \mE(\min_i Y_{i}) \quad  \Longleftrightarrow \quad \mE(\max_{i} X_{i})\geq \mE(\max_i Y_{i}).
\end{equation*}
\end{theorem}

Applying  Theorem \ref{thm:Gordonpos1} on processes $\cG(\cdot)$ and  $\cG_u(\cdot)$ (with $Y\leftrightarrow\cG$ and $X\leftrightarrow\cG_u$ correspondence) one easily recovers classical Gaussian matrix scaled maximum singular value upper bounds
\begin{equation}\label{eq:mt5a1}
  \xi^+_{sph}\leq \lim_{n\rightarrow\infty} \frac{1}{\sqrt{n}}\lp \sqrt{m} +\sqrt{n}\rp = \sqrt{\alpha} +1.
\end{equation}
Interestingly, although it is stated asymptotically, the bound actually holds for any $m,n\in\mN$. Nonetheless, much more can be done. To that end we also recall on Corollary 1.3 from \cite{Gordon85}.   
\begin{theorem}(\cite{Gordon85})
\label{thm:Gordoncor1} Assume the setup of Theorem \ref{thm:Gordonpos1}. Let $\psi_q(\cdot)$ be an increasing function on the real axis. Then
\begin{equation*}
\mE(\min_{i}\psi_q(X_{i}))\leq \mE(\min_i \psi_q(Y_{i})) \quad \Longleftrightarrow \quad  \mE(\max_{i}\psi_q(X_{i}))\geq \mE(\max_i\psi_q(Y_{i})).
\end{equation*}
\end{theorem}
This result is interesting as it suggest that the above given upper-bound might be tightened. The key question in that regard is whether one can find a better than linear choice for $\psi_q(\cdot)$  (choosing $\psi_q(x)=x$, one clearly has a perfect matching between Theorems \ref{thm:Gordoncor1} and \ref{thm:Gordonpos1}). While it turns out not to be possible (in $n\rightarrow \infty$ regime) to further improve on the bounds given in (\ref{eq:mt5a1}), we discover that choice $\psi_q(x)=e^{c_3 x}$, with $c_3>0$, is particularly interesting. First, we note that Theorems \ref{thm:Gordonpos1} and  \ref{thm:Gordoncor1} are special cases of concepts discussed in Corollary 3 in \cite{Stojnicgscompyx16}  and in Corollary 4 in \cite{Stojnicgscomp16}. Keeping in mind the ideas of \cite{Stojnicgscompyx16,Stojnicgscomp16} and the associated partial RDT lifting \cite{StojnicMoreSophHopBnds10}, one observes that combining (\ref{eq:mr5}) and (\ref{eq:mr5a0}) with the above mentioned choice $\psi_q(x)=e^{c_3 x},c_3>0$,results of Theorem \ref{thm:Gordoncor1}, and correspondence $Y\leftrightarrow\cG$ and $X\leftrightarrow\cG_u$ allows to write
\begin{eqnarray}
\label{eq:mr6}
  \mE \max_{\cX} e^{c_{3}\cG(\cX)} &\leq &   \mE \max_{\cX} e^{c_{3}\cG_u(\cX)},
\end{eqnarray}
where $\max_{\cX}$ means $\max_{\x\in \mS^n,\y\in\mS^m} $. An additional combination of (\ref{eq:mr1}) and (\ref{eq:mr6}) gives
\begin{eqnarray}
\label{eq:mr7}
  \mE \max_{\cX} e^{c_{3}\lp \sum_{i=1}^n \sum_{j=1}^m A_{ij}\x_i\y_j   + g   \rp  } &\leq &   \mE \max_{\cX} e^{c_{3} \lp \lp\g^{(1)}\rp^T\y  + \lp\g^{(2)}\rp^T\x \rp  },
  \end{eqnarray}
which is equivalent to
\begin{eqnarray}
\label{eq:mr8}
   \mE e^{c_{3} \max_{\cX}  \lp \sum_{i=1}^n  \sum_{j=1}^m A_{ij} \x_i\y_j   \rp  } 
 \leq   e^{-\frac{c_{3}^2}{2}} \mE e^{c_{3}  \max_{\cX}  \lp \lp\g^{(1)}\rp^T\y  + \lp\g^{(2)}\rp^T\x \rp  } ,
\end{eqnarray}
and
\begin{eqnarray}
\label{eq:mr9}
     \log \lp \mE e^{c_{3} \max_{\cX}  \lp \sum_{i=1}^n \sum_{j=1}^m A_{ij}\x_i\y_j    \rp  }
   \rp
      \leq 
   -\frac{c_{3}^2}{2} + \log \lp
    \mE e^{c_{3}  \max_{\cX}  \lp  \lp\g^{(1)}\rp^T\y  + \lp\g^{(2)}\rp^T\x   \rp }
    \rp .
\end{eqnarray}
By Chernoff/Markov inequality we have
 \begin{eqnarray}
 \label{eq:ldpeq2}
\frac{1}{n}\log \lp  \mP \lp \xi^+_{sph}
 \geq  u \rp \rp
& \leq  & \frac{1}{n} \log \lp \mE e^{c_3 \sqrt{n} \xi^+_{sph} -c_3\sqrt{n} u} \rp
=  \frac{1}{n} \log \lp \mE e^{c_3 \sqrt{n}  \xi^+_{sph} } \rp -\frac{1}{n}c_3\sqrt{n} u .
  \end{eqnarray}
Combining (\ref{eq:inteq1ab0}), (\ref{eq:mr9}),  and (\ref{eq:ldpeq2}) and adopting scaling $c_3\rightarrow c_3\sqrt{n}$ we find

 \begin{eqnarray}
 \label{eq:ldpeq3}
 \frac{1}{n}\log \lp  \mP \lp \xi^+_{sph}
 \geq  u \rp \rp
& \leq  &    \frac{1}{n} \log \lp \mE e^{c_3 n \xi^+_{sph} } \rp - c_3 u 
\nonumber \\
& =  &    \frac{1}{n} \log \lp \mE e^{c_3 \sqrt{n}  \max_{\cX} \y^TA\x } \rp - c_3 u 
\nonumber \\
& \leq &
 -\frac{c_{3}^2}{2}  +  \frac{1}{n}  \log\lp  \mE_{\g^{(1)},\g^{(2)}} e^{c_{3}\sqrt{n} \max_{\cX}  \lp  \lp\g^{(1)}\rp^T\y  + \lp\g^{(2)}\rp^T\x   \rp } \rp
-c_3 u
\nonumber \\
& = &
  -\frac{c_{3}^2}{2}  +  \frac{1}{n}  \log\lp  \mE_{\g^{(1)}} e^{c_{3}\sqrt{n}\| \g^{(1)}\|_2} \rp
+  \frac{1}{n}  \log\lp  \mE_{\g^{(2)}} e^{c_{3}\sqrt{n}\| \g^{(2)}\|_2} \rp
-c_3 u.
  \end{eqnarray}
Utilizing \cite{StojnicMoreSophHopBnds10}, we further write
 \begin{eqnarray}
 \label{eq:ldpeq4}
 \lim_{n\rightarrow \infty} \frac{1}{n}  \log\lp  \mE_{\g^{(1)}} e^{c_{3}\sqrt{n} \| \g^{(1)}\|_2} \rp
=
\min_{\gamma_1 > 0 } \phi_1(\gamma_1),
\end{eqnarray}
with
 \begin{eqnarray}
 \label{eq:ldpeq4a}
\phi_1(\gamma_1) = \gamma_1 c_3 -\frac{\alpha}{2} \log\lp 1- \frac{c_3}{2\gamma_1}\rp.
  \end{eqnarray}
 Finding the $\gamma_1$ derivative and equaling it to zero gives
 \begin{eqnarray}
 \label{eq:ldpeq4aa0}
\frac{d\phi_1(\gamma)}{d\gamma_1} = c_3 - \frac{\alpha}{ 2\gamma_1 - c_3}  + \frac{\alpha}{2\gamma_1}
=c_3 \lp 1 - \frac{\alpha}{ 2\gamma_1 (2\gamma_1 - c_3 ) } \rp = 0.
  \end{eqnarray}
One then obtains for the optimal $\gamma_1$
 \begin{eqnarray}
 \label{eq:ldpeq5}
\hat{ \gamma }_1 = \frac{c_3  +  \sqrt{c_3^2  +4\alpha} }{4}.
  \end{eqnarray}
Analogously to  (\ref{eq:ldpeq4}), (\ref{eq:ldpeq4a}), and (\ref{eq:ldpeq5}) we then also have
 \begin{eqnarray}
 \label{eq:ldpeq4b0}
 \lim_{n\rightarrow \infty} \frac{1}{n}  \log\lp  \mE_{\g^{(2)}} e^{c_{3}\sqrt{n} \| \g^{(2)}\|_2} \rp
=
\min_{\gamma_2 > 0 } \phi_2(\gamma_2),
\end{eqnarray}
with
 \begin{eqnarray}
 \label{eq:ldpeq4ab1}
\phi_2(\gamma_2) = \gamma_2 c_3 -\frac{1}{2} \log\lp 1- \frac{c_3}{2\gamma_2}\rp.
  \end{eqnarray}
and optimal $\gamma_2$
 \begin{eqnarray}
 \label{eq:ldpeq5b3}
\hat{ \gamma }_2 = \frac{c_3  +  \sqrt{c_3^2  +4} }{4}.
  \end{eqnarray}
A combination of (\ref{eq:ldpeq3})-(\ref{eq:ldpeq5b3}) gives
 \begin{eqnarray}
 \label{eq:ldpeq6}
 \frac{1}{n}\log \lp  \mP \lp \xi^+_{sph}
 \geq  u \rp \rp
& \leq  &  
   -\frac{c_{3}^2}{2}  +  \frac{1}{n}  \log\lp  \mE_{\g^{(1)}} e^{c_{3}\sqrt{n}\| \g^{(1)}\|_2} \rp
+  \frac{1}{n}  \log\lp  \mE_{\g^{(2)}} e^{c_{3}\sqrt{n}\| \g^{(2)}\|_2} \rp
-c_3 u
\nonumber \\
& = & 
   -\frac{c_{3}^2}{2}  + \phi_1(\hat{\gamma}_1)  + \phi_2(\hat{\gamma}_2)  
   -c_3 u .
  \end{eqnarray}
 Optimizing further over $c_3$ gives (\ref{eq:thm1eq2}) and completes the proof of the theorem.
\end{proof}

\subsection{Wishart minimal eigenvalue LDP}
\label{sec:ldpminwish}

Analogously to (\ref{eq:inteq1a}), we set
\begin{eqnarray}\label{eq:mininteq1a}
\xi^-_{sph} = \lim_{n\rightarrow\infty} \frac{1}{\sqrt{n}} \min_{\x\in\mS^n}  \max_{\y\in\mS^m} \y^TA\x.
\end{eqnarray}
and recognize that $\xi^-_{sph}$ is both the limiting scaled minimal singular value of $A$ and the square root of the limiting scaled minimal eigenvalue of $W$. Throughout this section we additionally assume $\alpha>1$ to avoid singularities. Algebraic manipulations similar to those from (\ref{eq:inteq1ab0}) give
\begin{eqnarray}\label{eq:mininteq1ab0}
\xi^-_{sph} & = & \lim_{n\rightarrow\infty} \frac{1}{\sqrt{n}}\min_{\x\in\mS^n} \max_{\y\in\mS^m} \y^TA\x
\nonumber \\
& = &
\lim_{n\rightarrow\infty} \frac{1}{\sqrt{n}} \min_{\x\in\mS^n} \| A\x \|_2
\nonumber \\
& = &
 \lim_{n\rightarrow\infty} \frac{1}{\sqrt{n}} \min_{\x\in\mS^n} \sqrt{\x^TA^TA\x}\nonumber \\
& = &
 \lim_{n\rightarrow\infty} \frac{1}{\sqrt{n}} \sqrt{\lambda_1(A^TA)}.
\end{eqnarray}
As in previous section, we are interested in large deviations principles (LDP) associated with $\xi^-_{sph}$. To that end, we consider the following likelihood
\begin{eqnarray}\label{eq:mininteq1ab0}
\mP\lp \xi^-_{sph}\leq u\rp \quad \mbox{ with } \quad u\leq \mE \xi^-_{sph},
\end{eqnarray}
and aim to determine the rate at which it decays to zero as a function of $u$ ($u$ is again taken as a real scalar proportional to $\mE\xi^-_{sph}$, i.e., independent of $n$). Similarly to the corresponding maximal eigenvalue scenario, determining the rate of probability decay in (\ref{eq:mininteq1ab0})  is also a classical problem in probability theory. In \cite{KatzPer10} a methodology similar to the
\emph{Coulomb gas} approach from \cite{MajVer09}  was developed and utilized to obtain explicit LDP characterization of $\xi^-_{sph}$. Following the approach outlined in Section \ref{sec:ldpmaxwish}, we attack this problem from a different angle and utilize partially lifted (RDT) to obtain an alternative LDP formulation. We summarize the main results that such methodology gives in the following theorem.

\begin{theorem}
\label{thm:thm2}
  Consider large $n,m\in\mN$ such that $\lim_{n\rightarrow\infty}\frac{m}{n}\rightarrow\alpha> 1$ and let $A_{ij}\sim \cN(0,1)$ be the independent elements of $A\in\mR^{m\times n}$. For a real scalar $c_3\geq 0$ set
  \begin{eqnarray}
   \label{eq:minthm1eq1}
 \hat{\gamma}_{1,l} & = &  \frac{-c_3 + \sqrt{c_3^2+4\alpha}}{4}
 \nonumber \\
 \phi_{1,l}(\gamma_{1}) & = &  -\gamma_{1}c_3 - \frac{\alpha}{2}\log \lp 1 + \frac{c_3}{2 \gamma_{1}}   \rp
 \nonumber \\
 \hat{\gamma}_{2} & = &  \frac{c_3 + \sqrt{c_3^2+4}}{4}
 \nonumber \\
 \phi_{2}(\gamma_2) & = &  \gamma_{2}c_3 - \frac{1}{2}\log \lp 1 - \frac{c_3}{2 \gamma_{2}}   \rp.
 \end{eqnarray}
 Let $\xi^-_{sph}$ be as in (\ref{eq:mininteq1ab0}) and for a real scalar $u\leq \mE \xi^-_{sph}$ set
  \begin{eqnarray}
   \label{eq:minthm1eq2}
 \nonumber \\
 \phi_l(u) & = & -\frac{c_3^2}{2} + \phi_{1,l}(\hat{\gamma}_1) + \phi_{2}(\hat{\gamma}_2) +c_3 u .
 \end{eqnarray}
One then has the following LDP with the indicator rate function $\zeta_l(u)$
\begin{eqnarray}
   \label{eq:minthm1eq2}
\lim_{n\rightarrow \infty} \frac{\log\lp \mP\lp \xi^-_{sph} \leq u  \rp \rp}{n} \leq \min_{c_3\geq 0}\phi_l(u)  \triangleq -\zeta_l(u).
\end{eqnarray}
\end{theorem}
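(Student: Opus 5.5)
The plan is to mirror the proof of Theorem \ref{thm:thm1}, replacing the Slepian-type comparison (Theorem \ref{thm:Gordonpos1}/\ref{thm:Gordoncor1}) by its min-max (Gordon) counterpart, again combined with the exponential choice $\psi_q(x)=e^{c_3x}$ as in the partially lifted RDT of \cite{Stojnicgscompyx16,Stojnicgscomp16,StojnicMoreSophHopBnds10}.

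\textbf{Step 1: Chernoff reduction.} For $c_3\geq 0$, Markov's inequality gives
\begin{equation*}
\frac{1}{n}\log\mP\lp\xi^-_{sph}\leq u\rp\leq \frac{1}{n}\log \mE e^{-c_3\sqrt{n}\min_{\x\in\mS^n}\max_{\y\in\mS^m}\y^TA\x}+c_3u ,
\end{equation*}
after the rescaling $c_3\rightarrow c_3\sqrt{n}$. Since $-\min_{\x}\max_{\y}\y^TA\x=\max_{\x}\min_{\y}\y^T(-A)\x$ and $-A$ has the same law as $A$, it suffices to bound $\mE e^{c_3\sqrt{n}\max_{\x}\min_{\y}\y^TA\x}$.

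\textbf{Step 2: Gordon comparison.} I will reuse the processes $\cG$ and $\cG_u$ from (\ref{eq:mr1}), now indexed by the outer variable $\x$ and the inner variable $\y$. By (\ref{eq:mr5}) and (\ref{eq:mr5a0}), the variances agree and the covariance difference is $\lp 1-\x^{(a_1)T}\x^{(a_2)}\rp\lp 1-\y^{(a_1)T}\y^{(a_2)}\rp$.
\begin{itemize}
\item When $\x^{(a_1)}=\x^{(a_2)}$ this difference vanishes, so the same-outer-index condition of Gordon's theorem holds with equality.
\item When the outer indices differ, the difference is $\geq 0$, i.e., $\cG$ is more correlated than $\cG_u$.
\end{itemize}
These are exactly the hypotheses of Gordon's min-max theorem (Theorem 1.1 / Corollary 1.3 of \cite{Gordon85}, with the increasing function $e^{c_3x}$, as generalized in \cite{Stojnicgscompyx16,Stojnicgscomp16}). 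With the correspondence $Y\leftrightarrow\cG$, $X\leftrightarrow\cG_u$ it yields
\begin{equation*}
\mE e^{c_3\max_{\x}\min_{\y}\lp\y^TA\x+g\rp}\leq \mE e^{c_3\max_{\x}\min_{\y}\lp\lp\g^{(1)}\rp^T\y+\lp\g^{(2)}\rp^T\x\rp}=\mE e^{c_3\lp\|\g^{(2)}\|_2-\|\g^{(1)}\|_2\rp}.
\end{equation*}
Pulling out $\mE e^{c_3 g}=e^{c_3^2/2}$ produces the $-\frac{c_3^2}{2}$ term, and independence factorizes the right-hand side into a $\g^{(1)}$ part and a $\g^{(2)}$ part. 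The $\g^{(2)}$ factor is exactly (\ref{eq:ldpeq4b0})--(\ref{eq:ldpeq5b3}) and contributes $\phi_2(\hat\gamma_2)$.

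\textbf{Step 3: the new factor.} It remains to evaluate $\lim\frac{1}{n}\log\mE e^{-c_3\sqrt{n}\|\g^{(1)}\|_2}$. I will write $-\|\g^{(1)}\|_2=\max_{\gamma>0}\lp-\gamma\sqrt{n}-\|\g^{(1)}\|_2^2/(4\gamma\sqrt{n})\rp$ and exchange the expectation with this scalar optimization. That exchange is exact at the exponential scale by a Laplace argument or concentration of $\|\g^{(1)}\|_2/\sqrt{n}$ around $\sqrt{\alpha}$. The Gaussian integral $\mE e^{-c_3\|\g^{(1)}\|_2^2/(4\gamma)}=\lp1+\frac{c_3}{2\gamma}\rp^{-m/2}$ then yields $\phi_{1,l}(\gamma_1)=-\gamma_1c_3-\frac{\alpha}{2}\log\lp1+\frac{c_3}{2\gamma_1}\rp$. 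Setting its derivative to zero gives $2\gamma_1(2\gamma_1+c_3)=\alpha$, whose positive root is $\hat\gamma_{1,l}$. Combining the three pieces with $+c_3u$ and minimizing over $c_3\geq0$ gives $\phi_l(u)$ and the claimed bound.

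\textbf{Main obstacle.} I expect the delicate step to be Step 2: confirming that the covariance sign pattern matches the direction Gordon's min-max inequality requires with the outer max over $\x$ and inner min over $\y$, and that the exponential lifted version (not only the plain expectation version) remains valid. I would first check it against the known unlifted case $c_3\rightarrow0$, which must reproduce the classical bound $\xi^-_{sph}\geq\sqrt{\alpha}-1$. A secondary point is justifying the min/max and expectation exchange in Step 3, since the sign there is opposite to Theorem \ref{thm:thm1}. The restriction $\alpha>1$ keeps all quantities nondegenerate.
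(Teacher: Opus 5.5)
Your proposal is correct and is essentially the paper's proof. It uses the same processes $\cG,\cG_u$, the same Gordon min-max comparison (you flip $A\to -A$ and apply the increasing $e^{c_3x}$ to $\max_{\x}\min_{\y}$, which is equivalent to the paper's decreasing choice $e^{-c_3x}$), the same Chernoff step and $-\frac{c_3^2}{2}$ term from $g$, and the same factorization into $\|\g^{(2)}\|_2$ and $-\|\g^{(1)}\|_2$ factors. One remark on Step 3, where the paper simply cites \cite{StojnicMoreSophHopBnds10}: the exchange of $\mE$ with $\max_{\gamma}$ is justified by the Laplace/Varadhan argument, because the supremum over $\gamma$ and the supremum over the value of $\|\g^{(1)}\|_2/\sqrt{n}$ commute. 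Concentration around $\sqrt{\alpha}$ does not justify it, since under the tilt $e^{-c_3\sqrt{n}\|\g^{(1)}\|_2}$ the norm concentrates at $2\hat{\gamma}_{1,l}\sqrt{n}$ instead.
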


\begin{proof} We again consider processes $ \cG (\cX) $ and  $ \cG_u (\cX) $ from (\ref{eq:mr1}) for which forms similar to (\ref{eq:mr2})-(\ref{eq:mr5a0}) will continue to hold. However, we slightly reindex everything. To be  a bit more precise, we keep $\g^{(1)}\in\mR^{m\times 1}$, $\g^{(2)}\in\mR^{n\times 1}$,  and  $g\in\mR$ as in  (\ref{eq:mr1}) and  define two centered Gaussian processes indexed by an array $\cX = \{\x,\y\}$
  \begin{eqnarray}
\label{eq:minmr1}
 \cG (\cX) & \triangleq &  \cG (\x,\y)  \triangleq  \sum_{i=1}^n \sum_{j=1}^m A_{i,j}\x_i\y_j  + g  \nonumber   \\
 \cG_u (\cX) & \triangleq &  \cG_u (\x,\y)  \triangleq    \lp\g^{(1)}\rp^T\y +   \lp\g^{(2)}\rp^T\x .
  \end{eqnarray}
We then take two arrays $\cX^{(a_1,b_1)}=\{ \x^{(a_1)},\y^{(b_1)}\}$ and $\cX^{(a_2,b_2)}=\{ \x^{(a_2)},\y^{(b_2)}\}$ with $\|\x^{(a_i)}\|_2=\|\y^{(b_i)}\|_2=1,i=1,2$ and write
  \begin{eqnarray}
\label{eq:minmr2}
\mE \cG (\cX^{(a_1,b_1)})\cG (\cX^{(a_2,b_2)})   & =  &    \lp \x^{(a_1)} \rp^T\x^{(a_2)} \lp \y^{(b_1)} \rp^T\y^{(b_2)}  + 1 
\nonumber   \\
\mE \cG_u (\cX^{(a_1,b_1)})\cG_u (\cX^{(a_2,b_2)})  & =  &  \lp \y^{(b_1)} \rp^T\y^{(b_2)}  +   \lp \x^{(a_1)} \rp^T\x^{(a_2)} .
  \end{eqnarray}
From (\ref{eq:minmr2}), one then finds
  \begin{eqnarray}
\label{eq:minmr5}
  \mE \cG (\cX^{(a_1,b_1)})\cG (\cX^{(a_2,b_2)})
 -
\mE \cG_u (\cX^{(a_1)})\cG_u (\cX^{(a_2)} )
 & = &    \lp \x^{(a_1)} \rp^T\x^{(a_2)} \lp \y^{(b_1)} \rp^T\y^{(b_2)}  + 1 
\nonumber
\\
& &  - \lp \y^{(b_1)} \rp^T\y^{(b_2)}  -   \lp \x^{(a_1)} \rp^T\x^{(a_2)}
\nonumber
\\
& = &
 \lp 1- \lp \x^{(a_1)} \rp^T\x^{(a_2)}  \rp \lp 1 - \lp \y^{(b_1)} \rp^T\y^{(b_2)}\rp
\nonumber
\\
& \geq &   0.
  \end{eqnarray}
We also note that   
  \begin{equation}
\label{eq:minmr5a0}
  \mE \cG (\cX^{(a_1,b_1)})\cG (\cX^{(a_1,b_2)})
 -
\mE \cG_u (\cX^{(a_1,b_1)})\cG_u (\cX^{(a_1,b_2)} )
  = 
 \lp 1- \lp \x^{(a_1)} \rp^T\x^{(a_1)}  \rp \lp 1 - \lp \y^{(b_1)} \rp^T\y^{(b_2)}\rp
  =   0 \leq 0,
  \end{equation}
  where the last equality follows since $\x^{(a_i)}\in\mS^n$, $i=1,2$. Moreover, one also has
  \begin{equation}
\label{eq:minmr5a0a0}
  \mE \cG (\cX^{(a_1,b_1)})\cG (\cX^{(a_1,b_1)})
 -
\mE \cG_u (\cX^{(a_1,b_1)})\cG_u (\cX^{(a_1,b_1)} )
  = 
 \lp 1- \lp \x^{(a_1)} \rp^T\x^{(a_1)}  \rp \lp 1 - \lp \y^{(b_1)} \rp^T\y^{(b_1)}\rp
  =   0.
  \end{equation}

We recall on the following (complete) version of Theorem 1.1 from \cite{Gordon85}.  
\begin{theorem}(\cite{Gordon85})
\label{thm:minGordonpos1} Let $X_{ij}$ and $Y_{ij}$, $1\leq i\leq n$, $1\leq j\leq m$, be two centered Gaussian processes which satisfy the following inequalities for all choices of indices
\begin{enumerate}
\item $\mE(X_{ii}^2)=\mE(Y_{ii}^2)$
\item $\mE(X_{ij}X_{il})\geq \mE(Y_{ij}Y_{il})$
\item $\mE(X_{ij}X_{lk})\leq \mE(Y_{ij}Y_{lk})$, $i\neq l$.
\end{enumerate}
 Then
\begin{equation*}
\mE(\min_{i} \max_{j} X_{ij})\leq \mE(\min_i \max_{j} Y_{ij}) \quad  \Longleftrightarrow \quad \mE(\max_{i}\min_{j} X_{ij})\geq \mE(\max_i \min_{j} Y_{ij}).
\end{equation*}
\end{theorem}

\begin{remark}
  One should note that while Theorem \ref{thm:Gordonpos1} is technically a special case of Theorem  \ref{thm:minGordonpos1}, it is known as Slepian lemma \cite{ Slep62} and it existed on its own long before Theorem  \ref{thm:minGordonpos1} was introduced in \cite{Gordon85}. For further developments and even more general variants of which Theorem  \ref{thm:minGordonpos1} is a special case see, e.g., \cite{Stojnicgscompyx16,Stojnicgscomp16}.
\end{remark}

Applying  Theorem \ref{thm:minGordonpos1} on processes $\cG(\cdot)$ and  $\cG_u(\cdot)$ (with $Y\leftrightarrow\cG$ and $X\leftrightarrow\cG_u$ correspondence) one recovers another classical result related to the minimum singular/eigenvalue of Wishart matrices
\begin{equation}\label{eq:minmt5a1}
  \xi^-_{sph}\geq \lim_{n\rightarrow\infty} \frac{1}{\sqrt{n}}\lp \sqrt{m} -\sqrt{n}\rp = \sqrt{\alpha} -1.
\end{equation}
We again note that even though everything is stated asymptotically, the bound actually holds for any $m,n\in\mN$. Similarly to what was observed in the previous section, much more actually can be done. To that end we recall on the following (complete) Corollary 1.3 from \cite{Gordon85} (see also \cite{StojnicMoreSophHopBnds10} for a slight refinement needed below).
\begin{theorem}(\cite{Gordon85})
\label{thm:minGordoncor1} Assume the setup of Theorem \ref{thm:minGordonpos1}. Let $\psi_q(\cdot)$ be an increasing function on the real axis. Then
\begin{equation*}
\mE(\min_{i}\max_{j}\psi_q(X_{ij}))\leq \mE(\min_i\max_{j} \psi_q(Y_{ij})).
\end{equation*}
Also, let $\psi_q(\cdot)$ be a decreasing function on the real axis. Then
\begin{equation*}
   \mE(\max_{i}\min_{j}\psi_q(X_{ij}))\geq \mE(\max_i\min_{j}\psi_q(Y_{ij})).
\end{equation*}
\end{theorem}

Following into the footsteps of the mechanism presented in Section \ref{sec:ldpmaxwish}, we  observe that choosing  $\psi_q(x)=e^{-c_3 x},c_3>0$ and combining such a choice with (\ref{eq:minmr5}) and (\ref{eq:minmr5a0}), results of Theorem \ref{thm:minGordoncor1}, and $Y\leftrightarrow\cG$ and $X\leftrightarrow\cG_u$ correspondence gives
\begin{eqnarray}
\label{eq:minmr6}
  \mE \max_{\x\in\mS^n} \min_{\y\in\mS^m} e^{-c_{3}\cG(\x,\y)} &\leq &   \mE \max_{\x\in\mS^n} \min_{\y\in\mS^m}  e^{-c_{3}\cG_u(\x,\y)} .
\end{eqnarray}
Combining further (\ref{eq:minmr1}) and (\ref{eq:minmr6}), one finds
\begin{eqnarray}
\label{eq:minmr7}
  \mE \max_{\x\in\mS^n} \min_{\y\in\mS^m}  e^{-c_{3}\lp \sum_{i=1}^n \sum_{j=1}^m A_{ij}\x_i\y_j   + g   \rp  } &\leq &   \mE \max_{\x\in\mS^n} \min_{\y\in\mS^m}  e^{-c_{3} \lp \lp\g^{(1)}\rp^T\y  + \lp\g^{(2)}\rp^T\x \rp  },
  \end{eqnarray}
which is equivalent to
\begin{eqnarray}
\label{eq:minmr8}
   \mE e^{c_{3} \max_{\x\in\mS^n} \min_{\y\in\mS^m}   -\lp \sum_{i=1}^n  \sum_{j=1}^m A_{ij} \x_i\y_j   \rp  } 
 \leq   e^{-\frac{c_{3}^2}{2}} \mE e^{  c_{3}  \max_{\x\in\mS^n} \min_{\y\in\mS^m}   -\lp \lp\g^{(1)}\rp^T\y  + \lp\g^{(2)}\rp^T\x \rp  } ,
\end{eqnarray}
and
\begin{equation}
\label{eq:minmr9}
     \log \lp \mE e^{ c_{3} \max_{\x\in\mS^n} \min_{\y\in\mS^m}  -\lp \sum_{i=1}^n \sum_{j=1}^m A_{ij}\x_i\y_j    \rp  }
   \rp
      \leq 
   -\frac{c_{3}^2}{2} + \log \lp
    \mE e^{c_{3}  \max_{\x\in\mS^n} \min_{\y\in\mS^m}  -\lp  \lp\g^{(1)}\rp^T\y  + \lp\g^{(2)}\rp^T\x   \rp }
    \rp.
\end{equation}
Relying again on Chernoff/Markov inequality, we further write
 \begin{eqnarray}
 \label{eq:minldpeq2}
\frac{1}{n}\log \lp  \mP \lp \xi^-_{sph}
 \leq  u \rp \rp
& \leq  & \frac{1}{n} \log \lp \mE e^{-c_3 \sqrt{n} \xi^-_{sph} +c_3\sqrt{n} u} \rp
=  \frac{1}{n} \log \lp \mE e^{-c_3 \sqrt{n}  \xi^-_{sph} } \rp +\frac{1}{n}c_3\sqrt{n} u .
  \end{eqnarray}
A combination of (\ref{eq:inteq1ab0}), (\ref{eq:mr9}),  and (\ref{eq:ldpeq2}) together with scaling $c_3\rightarrow c_3\sqrt{n}$ gives

 \begin{eqnarray}
 \label{eq:minldpeq3}
 \frac{1}{n}\log \lp  \mP \lp \xi^-_{sph}
 \leq  u \rp \rp
& \leq  &    \frac{1}{n} \log \lp \mE e^{-c_3 n \xi^-_{sph} } \rp + c_3 u 
\nonumber \\
& =  &    \frac{1}{n} \log \lp \mE e^{-c_3 \sqrt{n} \min_{\x\in\mS^n} \max_{\y\in\mS^m} \y^TA\x } \rp + c_3 u 
\nonumber \\
& =  &    \frac{1}{n} \log \lp \mE e^{c_3 \sqrt{n} \max_{\x\in\mS^n} \min_{\y\in\mS^m} -\y^TA\x } \rp + c_3 u 
\nonumber \\
& \leq &
 -\frac{c_{3}^2}{2}  +  \frac{1}{n}  \log\lp  \mE_{\g^{(1)},\g^{(2)}} e^{c_{3}\sqrt{n} \max_{\x\in\mS^n} \min_{\y\in\mS^m} -  \lp  \lp\g^{(1)}\rp^T\y  + \lp\g^{(2)}\rp^T\x   \rp } \rp
+c_3 u
\nonumber \\
& = &
  -\frac{c_{3}^2}{2}  +  \frac{1}{n}  \log\lp  \mE_{\g^{(1)}} e^{-c_{3}\sqrt{n}\| \g^{(1)}\|_2} \rp
+  \frac{1}{n}  \log\lp  \mE_{\g^{(2)}} e^{c_{3}\sqrt{n}\| \g^{(2)}\|_2} \rp
+c_3 u.
  \end{eqnarray}
Following \cite{StojnicMoreSophHopBnds10} we write
 \begin{eqnarray}
 \label{eq:minldpeq4}
 \lim_{n\rightarrow \infty} \frac{1}{n}  \log\lp  \mE_{\g^{(1)}} e^{-c_{3}\sqrt{n} \| \g^{(1)}\|_2} \rp
=
\max_{\gamma_1 > 0 } \phi_{1,l}(\gamma_1),
\end{eqnarray}
with
 \begin{eqnarray}
 \label{eq:minldpeq4a}
\phi_{1,l}(\gamma_1) = -\gamma_1 c_3 -\frac{\alpha}{2} \log\lp 1 + \frac{c_3}{2\gamma_1}\rp.
  \end{eqnarray}
 Finding the $\gamma_1$ derivative and equaling it to zero gives
 \begin{eqnarray}
 \label{eq:minldpeq4aa0}
\frac{d\phi_{1,l}(\gamma)}{d\gamma_1} = -c_3 - \frac{\alpha}{ 2\gamma_1 + c_3}  + \frac{\alpha}{2\gamma_1}
=c_3 \lp -1 + \frac{\alpha}{ 2\gamma_1 (2\gamma_1 + c_3 ) } \rp = 0.
  \end{eqnarray}
One then obtains for the optimal $\gamma_1$
 \begin{eqnarray}
 \label{eq:minldpeq5}
\hat{ \gamma }_{1,l} = \frac{-c_3  +  \sqrt{c_3^2  +4\alpha} }{4}.
  \end{eqnarray}
From  (\ref{eq:ldpeq4b0})-(\ref{eq:ldpeq5b3}), we also have
 \begin{eqnarray}
 \label{eq:minldpeq4b0}
 \lim_{n\rightarrow \infty} \frac{1}{n}  \log\lp  \mE_{\g^{(2)}} e^{c_{3}\sqrt{n} \| \g^{(2)}\|_2} \rp
=
  \phi_2(\hat{\gamma}_2),
\end{eqnarray}
with
 \begin{eqnarray}
 \label{eq:minldpeq5b3}
\hat{ \gamma }_2 = \frac{c_3  +  \sqrt{c_3^2  +4} }{4}.
  \end{eqnarray}
A combination of (\ref{eq:minldpeq3})-(\ref{eq:minldpeq5b3}) then gives
 \begin{eqnarray}
 \label{eq:minldpeq6}
 \frac{1}{n}\log \lp  \mP \lp \xi^-_{sph}
 \leq  u \rp \rp
& \leq  &  
   -\frac{c_{3}^2}{2}  +  \frac{1}{n}  \log\lp  \mE_{\g^{(1)}} e^{-c_{3}\sqrt{n}\| \g^{(1)}\|_2} \rp
+  \frac{1}{n}  \log\lp  \mE_{\g^{(2)}} e^{c_{3}\sqrt{n}\| \g^{(2)}\|_2} \rp
+c_3 u
\nonumber \\
& = & 
   -\frac{c_{3}^2}{2}  + \phi_{1,l}(\hat{\gamma}_1)  + \phi_2(\hat{\gamma}_2)  
   +c_3 u .
  \end{eqnarray}
Additional  optimization over $c_3$ gives (\ref{eq:minthm1eq2}) and completes the proof of the theorem.
\end{proof}

\subsection[]{Agreement with results from \cite{MajVer09,KatzPer10}}
\label{sec:ldpwishagr}
 
 We first consider the upper spectral edge, i.e., the maximal eigenvalue and then the lower spectral edge and the minimal eigenvalue.

\subsubsection{Upper spectral edge}
\label{sec:ldpwishagrmax}
 
 As stated earlier, in  \cite{MajVer09} the maximal Wishart eigenvalue LDP was obtained via a Coulomb gas approach. The following is the explicit characterization obtained in \cite{MajVer09}. 
 
\begin{eqnarray}
\label{eq:agr1}
 x & = &  u^2 - (\sqrt{\alpha}+1)^2, \quad  
  b = \lp \sqrt{\alpha} +1 \rp^2, \quad
   a = \lp \sqrt{\alpha} -1 \rp^2
  \nonumber 
  \\
f(x') & = & \frac{1}{2\pi x} \sqrt{ ( b - x' )(x' - a) } ;
  \nonumber 
  \\
I_{\Phi} & = & \int_{a}^{b} \log\lp \frac{x+b-x'}{b-x'}  \rp f(x')  dx' 
 \nonumber  \\
 \Phi_u & = & \frac{x}{2} - \frac{\alpha-1}{2}\log\lp \frac{x+b}{b}\rp -I_{\Phi} .
  \end{eqnarray} 

In Figure \ref{fig:fig1} (the right portion) we show in parallel both $\zeta_u$ obtained via partially lifted RDT in Section \ref{sec:ldpmaxwish} and  $\Phi_u$ obtained via Coulomb gas approach in  \cite{MajVer09}  and given in (\ref{eq:agr1}). We choose $\alpha=4$ and consider the $u$ values outside the bulk of the spectrum and above the upper spectral edge, i.e., we consider the $u$ values such that $u\geq \mE \xi^+_{sph} = \sqrt{\alpha} +1 = 3$. The obtained $\zeta_u$ and $\Phi_u$ curves are visually identical, i.e., the proposed RDT mechanism fully matches the results obtained via Coulomb gas approach.

  \begin{figure}[h]
\centering
\centerline{\includegraphics[width=.87\linewidth]{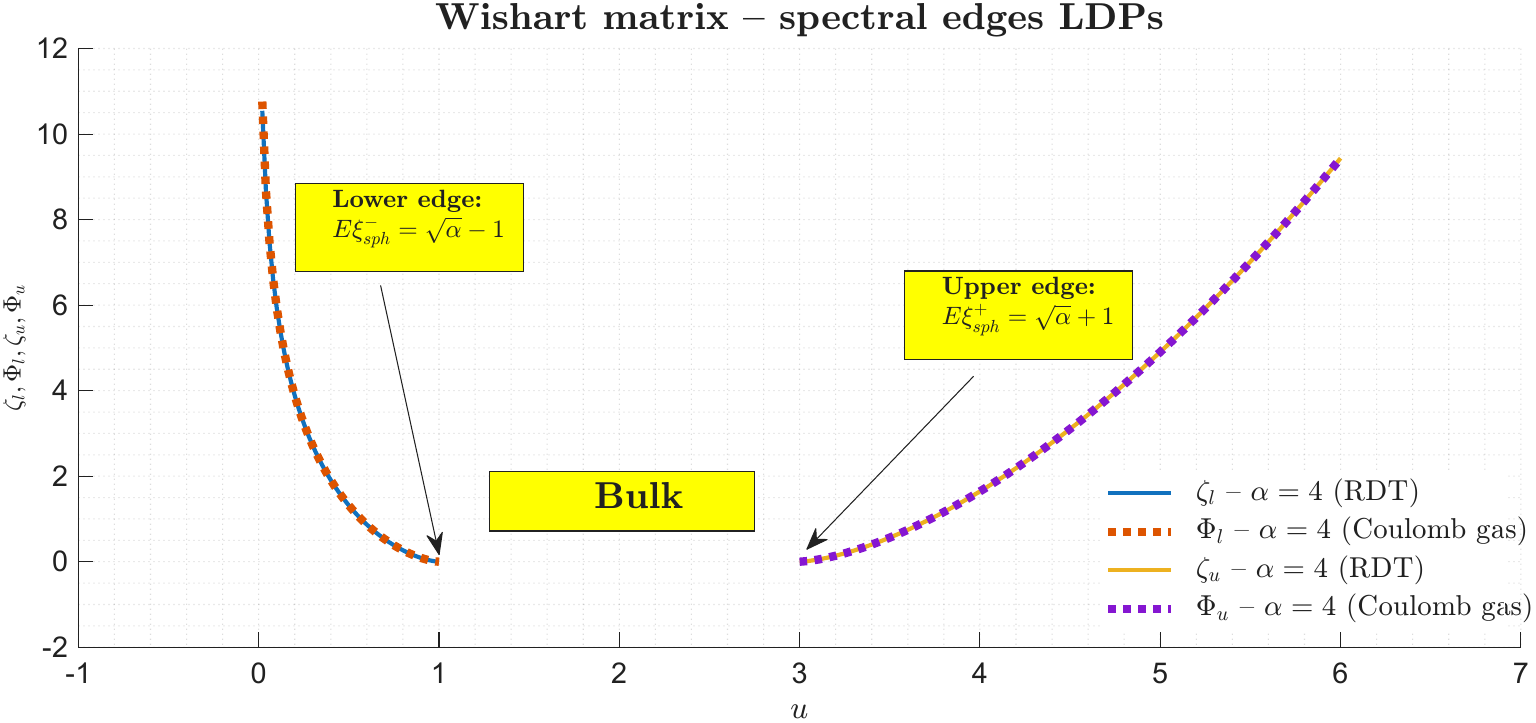}}
\caption{Wishart matrix -- lower ($\zeta_l$, $\Phi_l$) and upper ($\zeta_u$, $\Phi_u$) spectral edges LDPs; $\alpha=4$; RDT ($\zeta_l$, $\zeta_u$)  versus Coulomb gas ($\Phi_l$, $\Phi_u$) }
\label{fig:fig1}
\end{figure}

To give a bit more flavor as to what kind of agreement between the two methodologies is achieved we complement Figure \ref{fig:fig1} with Table \ref{tab:tab1}. For three concrete $u$ values ($u\in\{4,5,6\}$), both $\zeta_u$ and $\Phi_u$ are given. We also accompany these with all relevant RDT parameters ($\hat{c}_3$ is obtained as the solution of the optimization in (\ref{eq:thm1eq2}) whereas $\hat{\gamma}_1$ and $\hat{\gamma}_2$ are evaluated for $c_3=\hat{c}_3$). As the values given in  Table \ref{tab:tab1} show, one has a full numerical agrement between $\zeta_u$ and $\Phi_u$.

\begin{table}[h]
\caption{Wishart upper spectral edge LDP rate function values obtained via RDT ($\zeta_u$) and via Coulomb gas ($\Phi_u$) ; $\alpha=4$ }\vspace{.1in}
\centering
\def\arraystretch{1.2}
\begin{tabular}{ ||c||c||c||c|| }\hline\hline
 \hspace{-0in}  $u$                                             &  $4$ &  $5$ & $6$   \\ \hline\hline
    $\hat{c}_3$                 &   $ 2.5617 $ &   $ 3.9192$   &   $ 5.1235 $   \\ \hline
$\hat{\gamma}_1$                                         &   $ 1.8279$ &   $ 2.3798$   &   $2.9059 $   \\ \hline
      $\hat{\gamma}_2$                                      &   $ 1.4529 $ &   $ 2.0798 $   &  $  2.6559 $   \\ \hline \hline 
$\zeta_u(u)$                                         &  \bl{$\mathbf{1.6440}$} &  \bl{$\mathbf{ 4.9045}$}   &  \bl{$\mathbf{ 9.4336 }$}   \\ \hline \hline
$\Phi_u(u)$                                         &  $\mathbf{1.6440}$ &  $\mathbf{ 4.9045}$   &  $\mathbf{ 9.4336  }$   \\ 
\hline\hline
\end{tabular}
\label{tab:tab1}
\end{table}

\subsubsection{Lower spectral edge}
\label{sec:ldpwishagrmin}
 
 As stated earlier, in   \cite{KatzPer10}
 the minimal Wishart eigenvalue LDP was obtained through an extension of a Coulomb gas methodology from  \cite{MajVer09}. The following explicit rate function characterization was obtained in \cite{KatzPer10}. 
 
\begin{eqnarray}
\label{eq:minagr1}
 x & = &  (\sqrt{\alpha}-1)^2 - u^2, \quad  
  b = \lp \sqrt{\alpha} +1 \rp^2, \quad
   a = \lp \sqrt{\alpha} -1 \rp^2 
  \nonumber 
  \\
I_{\Phi,1} & = & 2\log\lp \frac{\sqrt{x+b-a}-\sqrt{x}}{\sqrt{b-a}} \rp  
 \nonumber  \\
I_{\Phi,2} & = & (\alpha-1) \log\lp 1+ 2\sqrt{\frac{x}{a}}\frac{(\sqrt{x+b-a}-\sqrt{x})}{b-a} \rp  
 \nonumber  \\
 \Phi_l & = &  - \frac{\alpha-1}{2}\log\lp 1-\frac{x}{a} \rp   - \frac{1}{2}\sqrt{x}\sqrt{x+b-a} + I_{\Phi,1} + I_{\Phi,2} .
  \end{eqnarray}

In Figure \ref{fig:fig1} (the left portion) we show in parallel both $\zeta_l$ obtained via partially lifted RDT in Section \ref{sec:ldpminwish} and  $\Phi_l$ obtained via Coulomb gas approach  in  \cite{KatzPer10} and summarized in (\ref{eq:minagr1}). We again choose $\alpha=4$ and consider the $u$ values outside the bulk of the spectrum and below the lower spectral edge, i.e., we consider the $u$ values such that $u\leq \mE \xi^-_{sph} = \sqrt{\alpha} -1 = 1$. As Figure \ref{fig:fig1} shows, the obtained $\zeta_l$ and $\Phi_l$ curves are again visually indistinguishable implying that the proposed RDT mechanism fully matches the results obtained via the Coulomb gas approach.

To ensure that the above agreement is not only visual but also fully numerical as well, we complement Figure \ref{fig:fig1} with Table \ref{tab:tab2}. We again take three concrete $u$ values ($u\in\{0.2,0.5,0.8\}$), and evaluate both $\zeta_l$ and $\Phi_l$. All relevant RDT parameters ($\hat{c}_3$ obtained as the solution of the optimization in (\ref{eq:minthm1eq2}) as well as  $\hat{\gamma}_{1,l}$ and $\hat{\gamma}_2$ evaluated for $c_3=\hat{c}_3$) are provided in Table \ref{tab:tab2} as well. We obsrve a full numerical agrement between $\zeta_l$ and $\Phi_l$.

\begin{table}[h]
\caption{Wishart lower spectral edge LDP rate function values obtained via RDT ($\zeta_l$) and via Coulomb gas ($\Phi_l$) ; $\alpha=4$ }\vspace{.1in}
\centering
\def\arraystretch{1.2}
\begin{tabular}{ ||c||c||c||c|| }\hline\hline
 \hspace{-0in}  $u$                                             &  $0.2$ &  $0.5$ & $0.8$   \\ \hline\hline
 $\hat{c}_3$                &   $ 14.6642 $ &   $ 5.1235 $   &   $  2.1685 $   \\ \hline
$\hat{\gamma}_1$                                         &   $  0.1339$ &   $ 0.3441$   &   $ 0.5954 $   \\ \hline
  $\hat{\gamma}_2$                                         &   $  7.3661 $ &   $ 2.6559 $   &  $  1.2796 $   \\ \hline \hline 
$\zeta_l(u)$                                         &  \bl{$\mathbf{3.8850 }$} &  \bl{$\mathbf{ 1.3161 }$}   &  \bl{$\mathbf{  0.2672 }$}   \\ \hline \hline
$\Phi_l(u)$                                         &  $\mathbf{3.8850}$ &  $\mathbf{  1.3161 }$   &  $\mathbf{  0.2672  }$   \\ 
\hline\hline
\end{tabular}
\label{tab:tab2}
\end{table}

\section{Wigner matrix LDPs}
\label{sec:ldpwig}

We now consider plain Gaussian (Ginibre) random matrix $A$ \cite{Ginibre65}. As these matrices are typical examples where Wigner properties (featured in random matrices with independent and identically distributed entries)  are in place, we refer to their symmetrized variants as (Gaussian) Wigner (or Wigner/GOE) matrices. The following are the symmetric variants of interest
\begin{equation*}\label{eq:wigamat1a0b0}
\bar{W}= \frac{1}{2} \lp A^T + A \rp.
\end{equation*}
As will be clear later on, instead of more typical $\frac{1}{\sqrt{2}}$ scaling, we adopt $\frac{1}{2}$ so that the results are in a direct agreement with those from \cite{MajVer09}. Also, due to the symmetry of the centered Gaussians, it is sufficient to consider only one side of $\bar{W}$'s spectrum. We focus on the maximal eigenvalue of $\bar{W}$. To make the presentation easier to follow, we parallel the exposition of Section \ref{sec:ldpmaxwish}. However, we proceed in a slightly faster fashion, avoid unnecessary repetitions, and instead focus on key differences.

\subsection{$\bar{W}$'s maximal eigenvalue LDP}
\label{sec:ldpmaxwig}

For the unit sphere  $\mS^n=\{\x|\|\x\|_2=1\}$ we set 
\begin{eqnarray}\label{eq:wiginteq1a}
\xi^+_{wig} = \lim_{n\rightarrow\infty} \frac{1}{\sqrt{n}} \max_{\x\in\mS^n} \x^T\bar{W}\x.
\end{eqnarray}
It immediately follows that $\xi^+_{wig}$ is the limiting scaled maximal eigenvalue of $\bar{W}$. Simple algebraic manipulations also give
\begin{eqnarray}\label{eq:wiginteq1ab0}
\xi^+_{wig} 
& = &
 \lim_{n\rightarrow\infty} \frac{1}{\sqrt{n}} \sqrt{\lambda_n(\bar{W})},
 \nonumber \\
& = & \lim_{n\rightarrow\infty} \frac{1}{\sqrt{n}}\max_{\x\in\mS^n} \x^T\bar{W}\x
\nonumber \\
& = &
\lim_{n\rightarrow\infty} \frac{1}{\sqrt{n}}  \max_{\x\in\mS^n} \frac{1}{2}\x^T(A^T+A)\x
\nonumber \\
& = &
 \lim_{n\rightarrow\infty} \frac{1}{\sqrt{n}} \max_{\x\in\mS^n} \x^TA\x,
\end{eqnarray}
where, as earlier, function $\lambda_i(\cdot)$ outputs the $i$-th smallest eigenvalue of its argument. 

In what follows we are interested in LDPs associated with $\xi^+_{wig}$, i.e., we look at the rate of decay (as a function of $u$ proportional to $\mE \xi^+_{wig}$) of the likelihood
\begin{eqnarray}\label{eq:wiginteq1ab1}
\mP\lp \xi^+_{wig}\geq u\rp \quad \mbox{ with } \quad u\geq \mE \xi^+_{wig}.
\end{eqnarray}
  As for the Wishart matrices, this is also a classical problem in probability theory. It was attacked in \cite{MajVer09} via the very same \emph{Coulomb gas} methodology utilized for characterization of Wishart matrices LDPs. As a result, explicit LDP rate function was derived. We below show that one can again utilize partially lifted RDT and obtain an alternative matching LDP formulation. The obtained results are summarized in the following theorem.

\begin{theorem}
\label{thm:wigthm1}
  Consider large $n\in\mN$  and let $A_{ij}\sim \cN(0,1)$ be the independent elements of $A\in\mR^{n\times n}$. For a real scalar $c_3\geq 0$ set
  \begin{eqnarray}
   \label{eq:wigthm1eq1}
 \hat{\bar{\gamma}}_{1} & = &  \frac{c_3\sqrt{2} + \sqrt{2 c_3^2+4}}{4}
 \nonumber \\
 \bar{\phi}_{1}(\gamma_1) & = &  \gamma_{1}c_3\sqrt{2} - \frac{1}{2}\log \lp 1 - \frac{c_3\sqrt{2}}{2 \gamma_{1}}   \rp
 \end{eqnarray}
 Let $\xi^+_{wig}$ be as in (\ref{eq:wiginteq1ab0}) and for a real scalar $u\geq \mE \xi^+_{wig}$ set
 \begin{eqnarray}
   \label{eq:wigthm1eq2}
 \nonumber \\
 \bar{\phi}_u(u) & = & -\frac{c_3^2}{2} + \bar{\phi}_{1}(\hat{\bar{\gamma}}_1)  -c_3 u .
 \end{eqnarray}
One then has the following LDP with the indicator rate function $\zeta_u(u)$
\begin{eqnarray}
   \label{eq:wigthm1eq2}
\lim_{n\rightarrow \infty} \frac{\log\lp \mP\lp \xi^+_{wig} \geq u  \rp \rp}{n} \leq \min_{c_3\geq 0}\bar{\phi}_u(u) \triangleq -\bar{\zeta}_u(u).
\end{eqnarray}
\end{theorem}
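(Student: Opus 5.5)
The plan is to follow the proof of Theorem \ref{thm:thm1} step by step, but with a single spherical index $\x\in\mS^n$ instead of the pair $\{\x,\y\}$. The starting point is the representation (\ref{eq:wiginteq1ab0}), $\xi^+_{wig}=\lim \frac{1}{\sqrt{n}}\max_{\x\in\mS^n}\x^TA\x$. Let $\g\in\mR^{n}$ and $g\in\mR$ be standard normal and independent of $A$. I would introduce the two centered Gaussian processes
\begin{eqnarray*}
\cG(\x)\triangleq \x^TA\x+g,\qquad \cG_u(\x)\triangleq \sqrt{2}\,\g^T\x .
\end{eqnarray*}
For $\x^{(a_1)},\x^{(a_2)}\in\mS^n$ with overlap $\rho=(\x^{(a_1)})^T\x^{(a_2)}$, a direct computation gives
\begin{eqnarray*}
\mE\cG(\x^{(a_1)})\cG(\x^{(a_2)})=\rho^2+1,\qquad \mE\cG_u(\x^{(a_1)})\cG_u(\x^{(a_2)})=2\rho .
\end{eqnarray*}
The first identity holds because $\mE(\x_1^TA\x_1)(\x_2^TA\x_2)=\sum_{ij}\x_{1,i}\x_{1,j}\x_{2,i}\x_{2,j}=\rho^2$. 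The difference of the two covariances is therefore $(1-\rho)^2\geq 0$, and it vanishes on the diagonal $\rho=1$. These are exactly the analogues of (\ref{eq:mr5}) and (\ref{eq:mr5a0}). The factor $\sqrt{2}$ in $\cG_u$ is chosen precisely so that this difference is a perfect square.

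Next, with $Y\leftrightarrow\cG$ and $X\leftrightarrow\cG_u$, I would apply Theorem \ref{thm:Gordoncor1} with the increasing function $\psi_q(x)=e^{c_3x}$, $c_3>0$. This gives $\mE\max_{\x}e^{c_3\cG(\x)}\leq\mE\max_{\x}e^{c_3\cG_u(\x)}$. Since $g$ is independent of $A$, we have $\mE e^{c_3 g}=e^{c_3^2/2}$, which can be factored out. Moreover $\max_{\x\in\mS^n}\sqrt{2}\,\g^T\x=\sqrt{2}\|\g\|_2$. Together these give
\begin{eqnarray*}
\log\mE e^{c_3\max_{\x\in\mS^n}\x^TA\x}\leq -\frac{c_3^2}{2}+\log\mE e^{c_3\sqrt{2}\|\g\|_2}.
\end{eqnarray*}
Then I would apply the Chernoff/Markov bound exactly as in (\ref{eq:ldpeq2}) and rescale $c_3\rightarrow c_3\sqrt{n}$ as in (\ref{eq:ldpeq3}). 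This yields
\begin{eqnarray*}
\frac{1}{n}\log\mP(\xi^+_{wig}\geq u)\leq -\frac{c_3^2}{2}+\frac{1}{n}\log\mE e^{c_3\sqrt{2}\sqrt{n}\|\g\|_2}-c_3u .
\end{eqnarray*}

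The remaining step is the asymptotic evaluation of the norm moment generating function. This is (\ref{eq:ldpeq4b0})--(\ref{eq:ldpeq5b3}) with $c_3$ replaced by $c_3\sqrt{2}$ and dimension ratio $1$. I would justify it as in \cite{StojnicMoreSophHopBnds10}. First write $\sqrt{n}\|\g\|_2=\min_{\gamma>0}\big(\gamma n+\|\g\|_2^2/(4\gamma)\big)$. Then compute the Gaussian integral $\mE e^{c\|\g\|_2^2/(4\gamma)}=(1-c/(2\gamma))^{-n/2}$ for each fixed $\gamma$, and exchange the minimum and the expectation at the exponential scale via a Laplace/saddle-point argument. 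This produces $\min_{\gamma_1>0}\bar{\phi}_1(\gamma_1)$. Setting the derivative to zero gives $2\gamma_1(2\gamma_1-c_3\sqrt{2})=1$, whose positive root is $\hat{\bar{\gamma}}_1=\frac{c_3\sqrt{2}+\sqrt{2c_3^2+4}}{4}$. Finally, minimizing over $c_3\geq 0$ gives the rate $-\bar{\zeta}_u(u)$.

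I expect the main obstacle to be the first step: choosing the comparison process so that the Slepian/Gordon conditions hold. Unlike the bilinear case, the quadratic form yields the covariance $\rho^2$, and matching it requires both the $\sqrt{2}$ scaling of $\cG_u$ and the auxiliary scalar $g$. Once that is settled, the rest is a direct transcription of the proof of Theorem \ref{thm:thm1}, with only the single $\phi$-term surviving.
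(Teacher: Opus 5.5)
Your proposal is correct and follows the paper's proof essentially step for step. It uses the same comparison processes $\x^TA\x+g$ and $\sqrt{2}\g^T\x$, the same covariance gap $(1-\rho)^2\geq 0$ vanishing on the diagonal, Gordon's corollary with $\psi_q(x)=e^{c_3x}$, Chernoff with the rescaling $c_3\rightarrow c_3\sqrt{n}$, and the $\gamma$-lifted evaluation of $\mE e^{c_3\sqrt{2n}\|\g\|_2}$. Your explicit variational justification of that last step usefully fills in what the paper only cites; note that for the stated upper bound only the trivial direction $\mE e^{c\sqrt{n}\|\g\|_2}\leq\min_{\gamma>c/2}\mE e^{c\gamma n+c\|\g\|_2^2/(4\gamma)}$ (with $c=c_3\sqrt{2}$) is needed, so no Laplace argument is required.
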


\begin{proof} As hinted above, we proceed by paralleling the proof of Theorem \ref{thm:thm1}. For standard normal vector $\g^{(1)}\in\mR^{n\times 1}$ and scalar $g\in\mR$ (all with elements independent among themselves and of the elements of $A$), we define two centered Gaussian processes indexed by an array $\cX = \{\x\}$
  \begin{eqnarray}
\label{eq:wigmr1}
 \bar{\cG} (\cX) & \triangleq &  \bar{\cG} (\x)  \triangleq  \sum_{i=1}^n \sum_{j=1}^m A_{i,j}\x_i\x_j  + g  \nonumber   \\
 \bar{\cG}_u (\cX) & \triangleq &  \bar{\cG}_u (\x)  \triangleq    \sqrt{2}\lp\g^{(1)}\rp^T\x .
  \end{eqnarray}
Taking two arrays $\cX^{(1)}=\{ \x^{(1)}\}$ and $\cX^{(2)}=\{ \x^{(2)}\}$ with $\|\x^{(i)}\|_2=1,i=1,2$, we have
  \begin{eqnarray}
\label{eq:wigmr2}
\mE \bar{\cG} (\cX^{(1)})\bar{\cG} (\cX^{(2)})   & =  &    \lp \lp \x^{(1)} \rp^T\x^{(2)} \rp^2    + 1 
\nonumber   \\
\mE \bar{\cG}_u (\cX^{(1)})\bar{\cG}_u (\cX^{(2)})  & =  &     2 \lp \x^{(1)} \rp^T\x^{(2)} .
  \end{eqnarray}
From (\ref{eq:wigmr2}), one further finds
  \begin{eqnarray}
\label{eq:wigmr5}
  \mE \bar{\cG} (\cX^{(1)}) \bar{\cG} (\cX^{(2)})
 -
\mE \bar{\cG}_u (\cX^{(1)}) \bar{\cG}_u (\cX^{(2)} )
 & = &   \lp \lp \x^{(1)} \rp^T\x^{(2)} \rp^2  + 1 
      -  2 \lp \x^{(1)} \rp^T\x^{(2)}
\nonumber
\\
& = &
 \lp 1- \lp \x^{(1)} \rp^T\x^{(2)}  \rp^2
  \geq   0.
  \end{eqnarray}
We additionally note    
  \begin{eqnarray}
\label{eq:wigmr5a0}
  \mE \bar{\cG} (\cX^{(1)}) \bar{\cG} (\cX^{(1)})
 -
\mE \bar{\cG}_u (\cX^{(1)}) \bar{\cG}_u (\cX^{(1)} )
  = 
 \lp 1- \lp \x^{(a_1)} \rp^T\x^{(a_1)}  \rp^2  
  =   0,
  \end{eqnarray}
  where the last equality follows since $\x^{(i)}\in\mS^n$ for $i=1,2$. Applying  Theorem \ref{thm:Gordonpos1} on processes $\bar{\cG}(\cdot)$ and  $\bar{\cG}_u(\cdot)$ (with $Y\leftrightarrow \bar{\cG}$ and $X\leftrightarrow \bar{\cG}_u$ correspondence) one easily recovers classical Gaussian/Wigner matrix scaled maximum eigenvalue upper bound
\begin{equation}\label{eq:wigmt5a1}
  \xi^+_{wig}\leq \lim_{n\rightarrow\infty} \frac{1}{\sqrt{n}}\lp \sqrt{2}\sqrt{n}\rp = \sqrt{2}.
\end{equation}
  
  Keeping in mind once again the ideas of \cite{Stojnicgscompyx16,Stojnicgscomp16} and the partial RDT lifting mechanism from \cite{StojnicMoreSophHopBnds10}, one observes that combining (\ref{eq:wigmr5}) and (\ref{eq:wigmr5a0}) with results of Theorem \ref{thm:Gordoncor1} and choosing $\psi_q(x)=e^{c_3 x},c_3>0$ gives
\begin{eqnarray}
\label{eq:wigmr6}
  \mE \max_{\cX} e^{c_{3}\bar{\cG}(\cX)} &\leq &   \mE \max_{\cX} e^{c_{3}\bar{\cG}_u(\cX)},
\end{eqnarray}
where $\max_{\cX}$ now stands for $\max_{\x\in \mS^n} $. Combining further  (\ref{eq:wigmr1}) and (\ref{eq:wigmr6}) gives
\begin{eqnarray}
\label{eq:wigmr7}
  \mE \max_{\cX} e^{c_{3}\lp \sum_{i=1}^n \sum_{j=1}^m A_{ij}\x_i\y_j   + g   \rp  } &\leq &   \mE \max_{\cX} e^{c_{3} \sqrt{2}\lp\g^{(1)}\rp^T\x   },
  \end{eqnarray}
which is equivalent to
\begin{eqnarray}
\label{eq:wigmr8}
   \mE e^{c_{3} \max_{\cX}  \lp \sum_{i=1}^n  \sum_{j=1}^m A_{ij} \x_i\x_j   \rp  } 
 \leq   e^{-\frac{c_{3}^2}{2}} \mE e^{c_{3}  \max_{\cX}  \sqrt{2} \lp\g^{(2)}\rp^T\x  } ,
\end{eqnarray}
and
\begin{eqnarray}
\label{eq:wigmr9}
     \log \lp \mE e^{c_{3} \max_{\cX}  \lp \sum_{i=1}^n \sum_{j=1}^m A_{ij}\x_i\x_j    \rp  }
   \rp
      \leq 
   -\frac{c_{3}^2}{2} + \log \lp
    \mE e^{c_{3}\sqrt{2}  \max_{\cX}   \lp\g^{(2)}\rp^T\x   }
    \rp .
\end{eqnarray}
Utilizing once again Chernoff/Markov inequality, we find
 \begin{eqnarray}
 \label{eq:wigldpeq2}
\frac{1}{n}\log \lp  \mP \lp \xi^+_{wig}
 \geq  u \rp \rp
& \leq  & \frac{1}{n} \log \lp \mE e^{c_3 \sqrt{n} \xi^+_{wig} -c_3\sqrt{n} u} \rp
=  \frac{1}{n} \log \lp \mE e^{c_3 \sqrt{n}  \xi^+_{wig} } \rp -\frac{1}{n}c_3\sqrt{n} u .
  \end{eqnarray}
A combination of (\ref{eq:wiginteq1ab0}), (\ref{eq:wigmr9}),  and (\ref{eq:wigldpeq2}) together with adopted scaling $c_3\rightarrow c_3\sqrt{n}$ also gives

 \begin{eqnarray}
 \label{eq:wigldpeq3}
 \frac{1}{n}\log \lp  \mP \lp \xi^+_{wig}
 \geq  u \rp \rp
& \leq  &    \frac{1}{n} \log \lp \mE e^{c_3 n \xi^+_{wig} } \rp - c_3 u 
\nonumber \\
& =  &    \frac{1}{n} \log \lp \mE e^{c_3 \sqrt{n}  \max_{\cX} \x^TA\x } \rp - c_3 u 
\nonumber \\
& \leq &
 -\frac{c_{3}^2}{2}  +  \frac{1}{n}  \log\lp  \mE_{\g^{(1)}} e^{c_{3}\sqrt{2}\sqrt{n} \max_{\cX}  \lp   \lp\g^{(1)}\rp^T\x   \rp } \rp
-c_3 u
\nonumber \\
& = &
  -\frac{c_{3}^2}{2}  +  \frac{1}{n}  \log\lp  \mE_{\g^{(1)}} e^{c_{3}\sqrt{2}\sqrt{n}\| \g^{(1)}\|_2} \rp
-c_3 u.
  \end{eqnarray}
Analogously to (\ref{eq:ldpeq4}) and (\ref{eq:ldpeq4a}), we write
 \begin{eqnarray}
 \label{eq:wigldpeq4}
 \lim_{n\rightarrow \infty} \frac{1}{n}  \log\lp  \mE_{\g^{(1)}} e^{c_{3}\sqrt{n} \| \g^{(1)}\|_2} \rp
=
\min_{\gamma_1 > 0 } \bar{\phi}_1(\gamma_1),
\end{eqnarray}
with
 \begin{eqnarray}
 \label{eq:wigldpeq4a}
\bar{\phi}_1(\gamma_1) = \gamma_1 c_3\sqrt{2} -\frac{1}{2} \log\lp 1- \frac{c_3\sqrt{2}}{2\gamma_1}\rp.
  \end{eqnarray}
 Equalling the $\gamma_1$ derivative to zero gives
 \begin{eqnarray}
 \label{eq:wigldpeq4aa0}
\frac{d\bar{\phi}_1(\gamma)}{d\gamma_1} = c_3\sqrt{2} - \frac{1}{ 2\gamma_1 - c_3\sqrt{2}}  + \frac{1}{2\gamma_1}
=c_3\sqrt{2} \lp 1 - \frac{1}{ 2\gamma_1 (2\gamma_1 - c_3\sqrt{2} ) } \rp = 0.
  \end{eqnarray}
One then finds the optimal $\gamma_1$
 \begin{eqnarray}
 \label{eq:wigldpeq5}
\hat{ \bar{\gamma} }_1 = \frac{c_3\sqrt{2}  +  \sqrt{2c_3^2  + 4} }{4}.
  \end{eqnarray}
 A combination of (\ref{eq:wigldpeq3})-(\ref{eq:wigldpeq5}) gives
 \begin{eqnarray}
 \label{eq:wigldpeq6}
 \frac{1}{n}\log \lp  \mP \lp \xi^+_{wig}
 \geq  u \rp \rp
& \leq  &  
   -\frac{c_{3}^2}{2}  +  \frac{1}{n}  \log\lp  \mE_{\g^{(1)}} e^{c_{3}\sqrt{2}\sqrt{n}\| \g^{(1)}\|_2} \rp
 -c_3 u
\nonumber \\
& = & 
   -\frac{c_{3}^2}{2}  + \bar{\phi}_1(\hat{\bar{\gamma}}_1) 
   -c_3 u ,
  \end{eqnarray}
which after optimization over $c_3$ gives (\ref{eq:wigthm1eq2}) and completes the theorem's proof.
\end{proof}

\subsection[]{Agreement with results from \cite{MajVer09}}
\label{sec:ldpwigagr}
 
 We consider the upper spectral edge, i.e., the maximal eigenvalue of $\bar{W}$  (due to symmetry, the lower spectral edge related to the minimal eigenvalue of $\bar{W}$ is identical).

\subsubsection{Upper spectral edge}
\label{sec:ldpwigagrmax}
 
 As stated earlier, in  \cite{MajVer09} the $\bar{W}$'s maximal eigenvalue LDP was derived via a Coulomb gas approach. The following is the explicit characterization obtained in \cite{MajVer09}. 
 
\begin{eqnarray}
\label{eq:wigagr1}
    \bar{\Phi}_u = \frac{u^2-1}{2} - \log\lp u\sqrt{2}\rp + \frac{1}{4 u^2} G\lp \frac{2}{u^2}\rp ,
  \end{eqnarray} 
where $G(z) =  _3{}{\mathcal F}_2([1,1,1.5],[2,3],z) $ is a hypergeometric function.
 
Following the presentation style of Section \ref{sec:ldpwishagr}, we in Figure \ref{fig:fig2} (the right portion) show in parallel both $\bar{\zeta}_u$ obtained via partially lifted RDT in Section \ref{sec:ldpmaxwig} and  $\bar{\Phi}_u$ obtained via Coulomb gas approach in \cite{MajVer09}  and given in (\ref{eq:wigagr1}). We consider the $u$ values outside the bulk of the spectrum and above the upper spectral edge, i.e., the $u$ values that satisfy $u\geq \mE \xi^+_{wig} = \sqrt{2} $. The obtained $\bar{\zeta}_u$ and $\bar{\Phi}_u$ curves are again visually identical.

  \begin{figure}[h]
\centering
\centerline{\includegraphics[width=.87\linewidth]{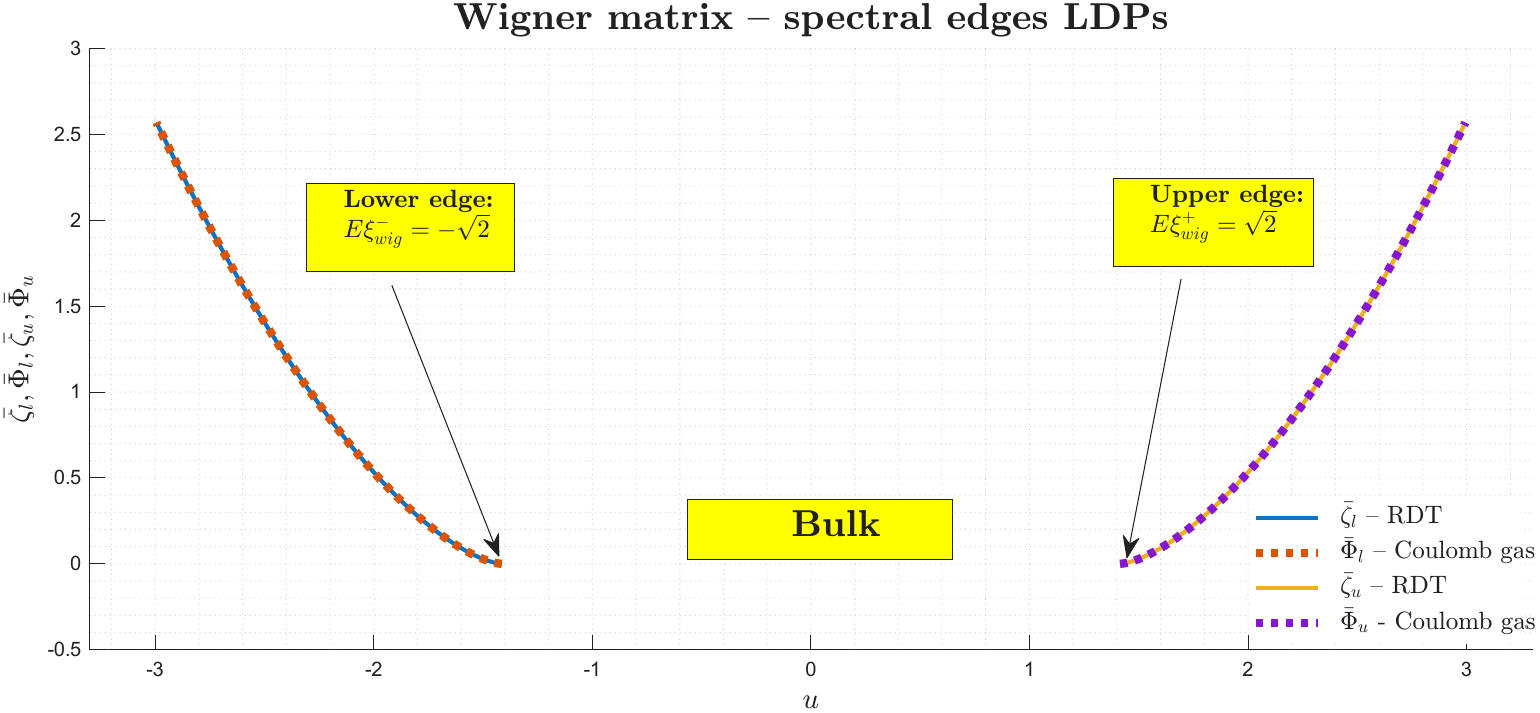}}
\caption{Wigner matrix -- lower ($\bar{\zeta}_l$, $\bar{\Phi}_l$) and upper ($\bar{\zeta}_u$, $\bar{\Phi}_u$) spectral edges LDPs; RDT ($\bar{\zeta}_l(u)=\bar{\zeta}_u(-u)$)  versus Coulomb gas ($\bar{\Phi}_l(u)=\bar{\Phi}_u(-u)$) }
\label{fig:fig2}
\end{figure}

To  show that the agreement between the two methodologies is not only visual but also fully numerical as well, we complement Figure \ref{fig:fig2} with Table \ref{tab:tab3}. For three concrete $u$ values ($u\in\{1.7342,2.2342,2.7342\}$), both $\bar{\zeta}_u$ and $\bar{\Phi}_u$ are given (due to the above mentioned symmetry of the standard normal random variables, $\bar{\zeta}_l(u)=\bar{\zeta}_u(-u)$ and $\bar{\Phi}_l(u)=\bar{\Phi}_u(-u)$; also, for completeness, relevant RDT parameters, $\hat{c}_3$ and $\hat{\bar{\gamma}}_1$, are provided in Table \ref{tab:tab3} as well). As Table \ref{tab:tab3} shows, one indeed has a full numerical agrement between $\bar{\zeta}_u$ and $\bar{\Phi}_u$.

\begin{table}[h]
\caption{Wigner upper (lower) spectral edge LDP rate function values obtained via RDT ($\bar{\zeta}_u$) and via Coulomb gas ($\bar{\Phi}_u$) }\vspace{.1in}
\centering
\def\arraystretch{1.2}
\begin{tabular}{ ||c||c||c||c|| }\hline\hline
 \hspace{-0in}  $u$                                             &  $1.7342$ &  $2.2342$ & $2.7342$   \\ \hline\hline
    $\hat{c}_3$                 &   $ 1.0037 $ &   $ 1.7296$   &   $ 2.3401 $   \\ \hline
$\hat{\gamma}_1$                                         &   $  0.9680$ &   $   1.4014$   &   $ 1.7940 $   \\ \hline \hline
$\bar{\zeta}_u(u)=\bar{\zeta}_l(-u)$                                         &  \bl{$\mathbf{0.2097}$} &  \bl{$\mathbf{  0.9016}$}   &  \bl{$\mathbf{ 1.9215 }$}   \\ \hline \hline
$\bar{\Phi}_u(u)=\bar{\Phi}_l(-u)$                                         &  $\mathbf{0.2097}$ &  $\mathbf{ 0.9016}$   &  $\mathbf{ 1.9215 }$   \\ 
\hline\hline
\end{tabular}
\label{tab:tab3}
\end{table}

\section{Conclusion}
\label{sec:conc}

We developed a novel methodology for studying random matrices' LDPs. The developed framework relies on a partially lifted variant of \emph{Random duality theory} (RDT) and completely circumvents prevalent traditional \emph{Coulomb gas} and \emph{tilted spherical integrals} random matrix theory approaches. The proposed methodology has two distinct features: (i) simplicity and (ii) generality. To demonstrate both of them, we apply it to Wishart and Wigner classical ensembles. For either of the considered ensembles, the obtained LDP characterizations of both upper and lower spectral edges are in a full agreement with the corresponding ones achieved through the traditional \emph{Coulomb gas} approaches in \cite{MajVer09,KatzPer10}.

Opportunities for further extensions are abundant. For example, they include almost all deformed and perturbed variants studied in recent literature and way beyond them. While conceptually similar to the core of the mechanism presented here, they also require additional technical refinements that are problem specific and will be discussed elsewhere.

%
%
%
%
%
%
%

\begin{singlespace}
\bibliographystyle{plain}
\bibliography{nflgscompyxRefs}
\end{singlespace}

\end{document}